\documentclass[11pt,centertags,oneside]{amsart}

\usepackage{amsmath,amstext,amsthm,amscd,typearea,hyperref}
\usepackage{amssymb}
\usepackage{a4wide}
\usepackage[mathscr]{eucal}
\usepackage{mathrsfs}
\usepackage{charter}
\usepackage{pdfsync}
\usepackage{dsfont}
\usepackage{cleveref}
\usepackage{xcolor}

\usepackage[a4paper,width=16.5cm,top=3cm,bottom=3cm]{geometry}

\numberwithin{equation}{section}

\allowdisplaybreaks
\emergencystretch=\maxdimen
\usepackage{multicol}

\newtheorem{\thethmA}{\Alph{thmA}}

\newtheorem{theorem}{Theorem}[section]
\newtheorem{definition}[theorem]{Definition}
\newtheorem{proposition}[theorem]{Proposition}
\newtheorem{corollary}[theorem]{Corollary}
\newtheorem{lemma}[theorem]{Lemma}
\newtheorem{remark}[theorem]{Remark}

\newcommand{\NN}{{\mathbb{N}}}
\newcommand{\ZZ}{{\mathbb{Z}}}

\newcommand{\CC}{{\mathbb{C}}}

\newcommand{\RR}{{\mathbb{R}}}

\newcommand{\GL}{\mathrm{GL}}
\newcommand{\SL}{\mathrm{SL}}
\newcommand{\Sp}{\mathrm{Sp}}

\newcommand{\diag}{\mathrm{diag}}

\newcommand{\tr}{\mathrm{tr}}

\newcommand{\Sym}{\mathrm{Sym}}
\newcommand{\trans}{^{\mathrm{T}}}

\newcommand{\infnorm}[1]{\left\lVert #1\right\rVert_\infty}
\newcommand{\dd}{\,\mathrm{d}}

\providecommand{\NN}{\mathbb{N}}
 
\newcommand{\opnorm}[1]{\left \lVert #1 \right \rVert_{\mathrm{op}}}

\newcommand{\rem}[1]{ }

\usepackage{fancyhdr}

\title{\textbf{Khintchine's Theorem for Symmetric matrices \\ via Flows on the Space of Symplectic Lattices}}

\author{Minchang Kim}
\address{Department of Mathematical Sciences and Research Institute of Mathematics, Seoul National University}
\email{chang011@snu.ac.kr}

\date{}

\begin{document}

\begin{abstract}
We establish Diophantine approximation results for real symmetric matrices by collections of linearly independent integer vectors. For $X \in \mathrm{Sym}_d(\mathbb{R})$, we prove a Dirichlet-type theorem guaranteeing the existence of integral Lagrangian frames $(Q, P) \in \mathrm{Mat}_{d \times 2d}(\mathbb{Z})$ that satisfy $\lVert QX + P \rVert_{\mathrm{op}} \leq c_d/N$ and $\lVert Q \rVert_{\mathrm{op}} \leq N$ for any $N \geq 1$. Furthermore, we establish a Khintchine-type zero-one law, demonstrating that the size of the set of $\psi$-approximable symmetric matrices is determined by the convergence or divergence of the series $\sum_{q \geq 1} q^{\varsigma - 1}\psi(q)^{\varsigma}$, where $\varsigma = d(d+1)/2$. The proofs rely on the reduction theory of the Siegel upper half-space, dynamical formulation over the space of symplectic lattices, and an analysis of the Siegel transform adapted to count Lagrangian frames instead of single lattice points.
\end{abstract}

\maketitle

\section{Introduction}\label{sec:intro}
In Diophantine approximation, the classical Dirichlet's theorem for systems of linear forms states that for any $N \geq 1$ and any matrix $X \in \mathrm{Mat}_{n \times m}(\RR)$, there exists an integer vector $(\mathbf{q}, \mathbf{p}) \in \ZZ^n\times \ZZ^m \setminus \{(0, 0)\}$ such that
\[
\infnorm{\mathbf{q}X + \mathbf{p}}\leq 1/ N^{n/m}, \qquad \infnorm{\mathbf{q}} \leq N,
\]
where $\|\cdot\|_{\infty}$ denotes the supremum norm. A natural question is whether one can find another approximating integer vector $(\mathbf{q}', \mathbf{p}') \in \ZZ^n \times \ZZ^m \setminus \{(0, 0)\}$, linearly independent of $(\mathbf{q}, \mathbf{p})$, that satisfies a similar approximation bound. Our first main result establishes such a Dirichlet-type theorem for integral frames. Notably, by restricting the approximation target to symmetric matrices, we recover the optimal exponent.

\begin{theorem}[Dirichlet-type theorem by integral frames]\label{thm:diri}
There exists a constant $c_d > 0$, depending only on $d$, with the following property: for any symmetric matrix $X \in \Sym_d(\RR)$ and any real number $N \geq 1$, there exists an integral matrix $(Q, P) \in \mathrm{Mat}_{d \times 2d}(\ZZ)$ satisfying $QQ\trans + PP\trans \in \GL_d(\RR)$ and $QP\trans = PQ\trans$ such that
\[
\opnorm{QX + P} \leq \frac{c_d}{N}, \quad \opnorm{Q} \leq N.
\]
Consequently, there exist infinitely many $(Q, P) \in \mathrm{Mat}_{d \times 2d}(\ZZ)$ satisfying $QQ\trans + PP\trans \in \GL_d(\RR)$, $QP\trans = PQ\trans$, and
\[
\opnorm{QX + P} \leq \frac{c_d}{\opnorm{Q}},
\]
where $\|\cdot\|_{\mathrm{op}}$ is the operator norm with respect to the standard Euclidean norm on $\RR^d$.
\end{theorem}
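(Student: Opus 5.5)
The plan is to pass to the space of symplectic lattices and use reduction theory for $\Sp_{2d}(\ZZ)$. Consider the symplectic lattice
\[
\Lambda_{X,N} = g_N u_X \ZZ^{2d}, \qquad u_X = \begin{pmatrix} I & 0 \\ X & I \end{pmatrix}, \quad g_N = \begin{pmatrix} N^{-1} I & 0 \\ 0 & N I\end{pmatrix},
\]
written with row vectors. A row $(\mathbf q,\mathbf p)\in\ZZ^{2d}$ is sent to $(\mathbf q/N,\ N(\mathbf qX+\mathbf p))$. We have $u_X\in \Sp_{2d}(\RR)$ because $X$ is symmetric, and $g_N\in\Sp_{2d}(\RR)$. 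Stacking $d$ rows gives $(Q,P)\in \mathrm{Mat}_{d\times 2d}(\ZZ)$. The condition $QP\trans = PQ\trans$ says that the rows span an isotropic subspace for the standard symplectic form. The condition $QQ\trans+PP\trans\in\GL_d(\RR)$ says the rows are linearly independent. So the target is: every symplectic lattice $\Lambda=g\ZZ^{2d}$ with $g\in \Sp_{2d}(\RR)$ contains $d$ linearly independent vectors spanning a Lagrangian subspace, each of Euclidean norm at most $C_d$. Once we have such vectors, their images $(Q/N,\ N(QX+P))$ have rows of norm $\le C_d$. This gives $\opnorm{Q}\le \sqrt d\, C_d N$ and $\opnorm{QX+P}\le \sqrt d\, C_d/N$. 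Replacing $N$ by $N/(\sqrt d C_d)$ (valid when $N\ge \sqrt d C_d$) gives $\opnorm{Q}\le N$ and $\opnorm{QX+P}\le d C_d^2/N$. For $1\le N<\sqrt d C_d$ we take $(Q,P)=(0,I)$: then $\opnorm{Q}=0$ and $\opnorm{P}=1\le \sqrt d C_d/N$. Hence $c_d=dC_d^2$ works.

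The key step is the uniform existence of a short Lagrangian frame in any symplectic lattice. I would prove it via reduction theory of the Siegel upper half-space, i.e. Siegel sets for $\Sp_{2d}(\RR)/\Sp_{2d}(\ZZ)$. Every $g$ can be written as $g=k\,a\,n\,\gamma$ with $k\in K$, $\gamma\in\Sp_{2d}(\ZZ)$, $n$ in a compact subset of the unipotent radical of the Borel subgroup, and $a=\diag(a_1,\dots,a_d,a_1^{-1},\dots,a_d^{-1})$ with $a_1\le c a_2\le\dots\le c^{d-1}a_d\le c^d$, where the last inequality comes from the simple root $2\epsilon_d$ and $c$ is fixed. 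In particular every $a_i$ is bounded above by a constant.

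With the Borel chosen so that $\mathrm{span}(e_1,\dots,e_d)$ is the stabilized Lagrangian, $n$ preserves that span and its flag, and $\gamma^{-1}$ maps $\ZZ^{2d}$ to itself symplectically. The vectors $k a n e_i$ for $i\le d$ then lie in $g\ZZ^{2d}$ (up to the $\gamma$ bookkeeping) and span a Lagrangian subspace. Their norms are bounded because the upper-triangular structure of $n$ gives $an e_i=\sum_{j\le i} a_j n_{ji}e_j$, each coefficient is bounded, and each $a_j\le$ const. Pulling back through $g_N u_X$ and $\gamma$ gives integral $(Q,P)$ with rank $d$ and $QP\trans=PQ\trans$.

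The main obstacle is getting the conventions right: row versus column action, which Siegel domain inequalities hold, and whether the stabilized isotropic subspace has small or large scaling. If the Siegel condition bounds the $a_i$ from below instead, I would use the opposite Lagrangian $\mathrm{span}(e_{d+1},\dots,e_{2d})$, whose scalings are $a_i^{-1}$. A Minkowski-type alternative, taking successive minima with the symplectic duality $\lambda_i\lambda_{2d+1-i}\asymp 1$, gives short independent vectors but not isotropy directly, which is why I would rely on reduction theory.

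For the "consequently" part: if $X$ has a row not in $\mathbb{Q}^d$, fix $N$ and obtain $(Q,P)$; at least one row of $QX+P$ is nonzero, say of norm $\delta>0$. Choosing $N'>c_d/\delta$ forces a new pair with $\opnorm{Q'X+P'}<\delta$, so the pairs differ, and $\opnorm{QX+P}\le c_d/N\le c_d/\opnorm{Q}$ (for $Q\ne0$) yields infinitely many solutions. If $X$ is rational, take $X=R/m$ and $(Q,P)=(kmI,-kR)$ for $k\ge 1$. These have $QX+P=0$ and are isotropic since $R$ is symmetric, giving infinitely many trivially.
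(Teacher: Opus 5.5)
Your proof is correct. It reaches the theorem by a different route from the paper's, although both rest on reduction theory for $\Sp_{2d}(\ZZ)$.

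The paper works on the Siegel upper half-space. It moves $Z = X + iyI_d$, with $y \asymp N^{-2}$, into Siegel's fundamental domain $\mathcal{F}_d$ by some $\gamma = \begin{pmatrix} A & B \\ C & D\end{pmatrix}\in\Sp_{2d}(\ZZ)$. Minkowski reduction together with Freitag's lemmas gives $\mathrm{Im}(\gamma\cdot Z)\succeq c_d^{-1}I_d$. Both inequalities are then read off at once from the identity $(Y')^{-1} = y^{-1}(CX+D)(CX+D)\trans + y\,CC\trans$, taking $(Q,P)=(C,D)$. That right-hand side is exactly the Gram matrix of the frame $(\sqrt{y}\,C,\ (CX+D)/\sqrt{y})$ in the symplectic lattice $\ZZ^{2d}u_X\diag(\sqrt{y}I_d, I_d/\sqrt{y})$.

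So the paper is really proving your key lemma: every symplectic lattice contains a Lagrangian frame of bounded length. The paper only mentions this in a remark. It extracts the lemma from the explicit fundamental domain, whereas you extract it from a Siegel set $K A_t N_\omega \Gamma$ and the simple root $2\epsilon_d$.

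What each route buys:
\begin{itemize}
\item The paper's route yields an explicit $c_d$. It gets the operator-norm bounds directly from a single Loewner inequality.
\item Your route is more conceptual and separates the geometry-of-numbers statement from the rescaling by $g_N$. It transfers to other arithmetic groups with little change: a finite set of cusp representatives $c\in G(\mathbb{Q})$ only forces you to replace $e_i$ by $Me_i$. The cost is an inexplicit constant and a harmless $\sqrt{d}$ loss.
\end{itemize}

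Two small points.

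First, your initial choice of convention is the correct one. For column lattices $kan\ZZ^{2d}$ with $n$ in the upper-triangular Borel, Gram--Schmidt on a reduced basis gives $\alpha(a)\le t$ on the simple roots; for $\SL_2$ this reads $a_1^2\le 2/\sqrt3$. Hence all $a_i$ are bounded above. The fallback you describe would not work as stated. The element $n$ does not preserve $\mathrm{span}(e_{d+1},\dots,e_{2d})$, and $nf_i$ picks up $e_j$-components that get multiplied by possibly large $a_j$. You would have to conjugate by $J$ and use the opposite Borel instead.

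Second, in the ``consequently'' part you should justify that $QX+P\neq 0$ when $X$ is irrational. If $QX+P=0$, then $(Q,P)=Q(I_d,-X)$ has rank $d$ only if $Q$ is invertible, which forces $X=-Q^{-1}P\in\mathrm{Mat}_d(\mathbb{Q})$. Similarly, for $N>c_d$ you automatically get $Q\neq 0$: otherwise $P$ is an invertible integral matrix, so $\opnorm{P}\ge 1 > c_d/N$.
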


A natural question is whether the same approximation rate can be obtained for general $X \in \mathrm{Mat}_d(\RR)$ by imposing the constraint $QQ\trans + PP\trans \in \GL_d(\RR)$. As illustrated in Remark \ref{rmk:vwa}, this is fundamentally obstructed, and the exponent cannot be more than $1/d$.

\begin{remark}\label{rmk:vwa}
    Let $X = \begin{pmatrix} \alpha & \beta \\ 0 & 1 \end{pmatrix} \in \mathrm{Mat}_2(\RR)$. Suppose, for the sake of contradiction, that there exists a universal approximation rate $1/2 + \varepsilon$ with $\varepsilon > 0$. That is, there are infinitely many $(Q, P) \in \mathrm{Mat}_{2 \times 4}(\ZZ)$ satisfying $QQ\trans + PP\trans \in \GL_2(\RR)$ and
    \[
    \infnorm{QX + P} \leq \frac{C}{\infnorm{Q}^{1/2 + \varepsilon}}
    \]
    for some constant $C$. Let $\mathbf{q}_1, \mathbf{q}_2$ and $\mathbf{p}_1, \mathbf{p}_2$ denote the column vectors of $Q$ and $P$, respectively. Then we can express the columns of $QX + P$ as
    \[
    QX + P = (\alpha \mathbf{q}_1 + \mathbf{p}_1,\; \beta \mathbf{q}_1 + \mathbf{q}_2 + \mathbf{p}_2).
    \]
    If $\mathbf{q}_1 = \mathbf{0}$, the norm bound $\infnorm{QX + P} \to 0$ forces the integer vectors $\mathbf{p}_1$ and $\mathbf{q}_2 + \mathbf{p}_2$ to vanish for sufficiently large $\infnorm{Q}$. This implies $P = (\mathbf{0}, -\mathbf{q}_2)$ and $Q = (\mathbf{0}, \mathbf{q}_2)$. Consequently, $QQ\trans + PP\trans = 2\mathbf{q}_2 \mathbf{q}_2\trans \notin \GL_2(\RR)$. Thus, $\mathbf{q}_1$ must be non-zero. Let $q = \infnorm{\mathbf{q}_1}$. The norm bound on both columns simultaneously yields integers $p$ and $p'$ such that
    \[
    \max\{\lvert q\alpha + p \rvert, \lvert q\beta + p' \rvert\} \leq \frac{C}{\infnorm{Q}^{1/2 + \varepsilon}} \leq \frac{C}{q^{1/2 + \varepsilon}}.
    \]
    However, such a uniform rate is well known to be impossible for almost every $(\alpha, \beta) \in \RR^2$. 
\end{remark}

For general matrix algebras, ten Have and Tijdeman \cite{THT96} proved that for every $X \in \mathrm{Mat}_d(\RR)$ and $N > 1$, there exist $Q \in \GL_d(\RR) \cap \mathrm{Mat}_d(\ZZ)$ and $P \in \mathrm{Mat}_d(\ZZ)$ satisfying $\infnorm{QX + P} \leq 1/N^{1/d}$ and $\infnorm{Q} \leq N$, and they showed that the exponent $1/d$ is optimal in this generality. In contrast to their result, we recover the optimal approximation exponent of $1$ by restricting our targets to real symmetric matrices, and the invertibility condition $Q \in \GL_d(\RR) \cap \mathrm{Mat}_d(\ZZ)$ is replaced by the following constraints
\[
QQ\trans + PP\trans \in \GL_d(\RR),\qquad QP\trans = PQ\trans.
\]

Pairs of integral matrices $(Q, P)$ satisfying these constraints naturally correspond to what we call \emph{integral Lagrangian frames}.

\begin{definition}[Lagrangian frames]\label{def:lagframe}
The space of Lagrangian frames is defined as
\[
\mathcal{V}^2 := \left\{ (Q, P) \in \mathrm{Mat}_{d \times 2d}(\RR) : \det(QQ\trans + PP\trans) \neq 0,\ QP\trans = PQ\trans \right\}, 
\]
and we denote the set of integral Lagrangian frames by $\mathcal{V}^2_{\ZZ} := \mathcal{V}^2 \cap \mathrm{Mat}_{d \times 2d}(\ZZ)$. We say that an integral frame $(Q, P) \in \mathcal{V}^2_{\ZZ}$ is \emph{primitive}, written $(Q, P) \in \mathcal{V}^2_{\mathrm{prim}}$, if its $d$ row vectors generate a rank-$d$ primitive (i.e., saturated) sublattice of $\ZZ^{2d}$.
\end{definition}

Let $\omega$ be the standard symplectic form on $\RR^{2d}$ and $G = \Sp_{2d}(\RR)$ be the symplectic group which is a group of linear transformations preserving $\omega$. The terminology `Lagrangian frame' stems from the fact that the row vectors of $(Q, P) \in \mathcal{V}^2$ span a Lagrangian subspace, i.e., a maximal isotropic subspace of $\RR^{2d}$ with respect to $\omega$. As discussed in the Remark \ref{rmk:finding_short_frame}, Theorem \ref{thm:diri} can be interpreted as the existence of a small approximating lattice frame spanning a Lagrangian subspace in $\RR^{2d}$. In classical Diophantine approximation and the geometry of numbers, the existence of a short single lattice vector is enough to show the Dirichlet-type theorem. Here, we show the existence of a ``short frame", and the symplectic structure provides an advantage in this regard. 

The frame approximation setting can be extended to the Khintchine-type theorem. Let us recall the classical Khintchine-Groshev theorem. For a monotonically non-increasing function $\psi: \NN \rightarrow \RR_{>0}$, a matrix $X \in \mathrm{Mat}_{n \times m}(\RR)$ is called $\psi$-approximable if there exist infinitely many $(\mathbf{q}, \mathbf{p}) \in \ZZ^n \times \ZZ^m \setminus \{(0, 0)\}$ such that $\infnorm{\mathbf{q}X + \mathbf{p}} \leq \psi(\infnorm{\mathbf{q}})$. Let $W(\psi)$ be a set of all $\psi$-approximable matrices in $\mathrm{Mat}_{n \times m}(\RR) \cap [0, 1)^{nm}$. Then classical Khintchine-Groshev's theorem states that the set $W(\psi)$ is a Lebesgue $0$ or $1$ set depending on whether the series $\sum_q q^{n-1}\psi(q)^m$ converges or diverges (see, \cite{Khi24, Gro38}).

In this paper, we establish a Khintchine-type theorem in the context of integer frames.

\begin{theorem}[Khintchine-type zero-one law]\label{thm:sk}
Let $d \geq 1$ and define the critical exponent $\varsigma = d(d+1)/2$, which is exactly the dimension of $\Sym_d(\RR)$. Let $\psi : [1,\infty) \to (0,\infty)$ be a non-increasing function. Define the set $W(\psi)$ as below
\[
W(\psi) = \left\{ X \in \Sym_d(\RR) \cap [0, 1)^{d \times d}: \exists \infty \text{ many } (Q, P) \in \mathcal{V}^2_{\mathrm{prim}} \text{ s.t. } \infnorm{QX + P} \leq \psi(\infnorm{Q}) \right\}.
\]
Then the $d(d+1)/2$-dimensional Lebesgue measure of $W(\psi)$ is given by
\[ \mathrm{Leb}(W(\psi)) = \begin{cases} 0 & \text{if } \sum_{q=1}^{\infty} q^{\varsigma-1}\psi(q)^\varsigma < \infty, \\ 1 & \text{if } \sum_{q=1}^{\infty} q^{\varsigma-1}\psi(q)^\varsigma = \infty. \end{cases} \]
\end{theorem}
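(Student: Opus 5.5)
We propose to prove Theorem \ref{thm:sk} through the Dani correspondence on the space of symplectic lattices $G/\Gamma$, where $G=\Sp_{2d}(\RR)$ and $\Gamma=\Sp_{2d}(\ZZ)$. To each $X\in\Sym_d(\RR)$ we attach the unipotent element $u_X=\begin{pmatrix} I & 0\\ X & I\end{pmatrix}\in G$, so that a row frame $(Q,P)$ is sent to $(QX+P,\,Q)$ up to the ordering of blocks. We also use the diagonal flow $a_t=\diag(e^{t}I_d,e^{-t}I_d)$. The unipotent orbits $\{u_X\Gamma : X\in\Sym_d\}$ form the expanding horospherical subgroup of $a_t$, and its dimension is exactly $\varsigma$. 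A primitive Lagrangian frame $(Q,P)\in\mathcal V^2_{\mathrm{prim}}$ corresponds to a primitive rank-$d$ Lagrangian sublattice of $\ZZ^{2d}$. The condition $\infnorm{QX+P}\le\psi(\infnorm Q)$ with $\infnorm Q\sim e^{t}$ then becomes the statement that $a_tu_X\ZZ^{2d}$ contains a Lagrangian sublattice lying in a prescribed shrinking box $B_t$.

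\textbf{Convergence half.} We write $W(\psi)$ as the $\limsup$ of the sets
\[
A_{Q,P}=\{X:\infnorm{QX+P}\le\psi(\infnorm Q)\},
\]
group the frames dyadically by $\infnorm Q\in[2^k,2^{k+1})$, and apply Borel--Cantelli. The key count is as follows. Fix a primitive $Q$ whose row space satisfies the Lagrangian condition. The admissible $P$ with $QP\trans=PQ\trans$ form a coset of a lattice of rank $\varsigma$, and for fixed $(Q,P)$ the set $A_{Q,P}$ has measure about $\psi^{\varsigma}\,|\det Q|^{-(d+1)}$. We then sum over $Q$ in the dyadic shell. The cleanest route is to avoid this explicit computation and instead use a Siegel transform: define $\widehat f(g\Gamma)=\sum_{(Q,P)\in\mathcal V^2_{\mathrm{prim}}} f((Q,P)g)$, prove a Siegel mean value formula $\int_{G/\Gamma}\widehat f=c\int_{\mathcal V^2}f$ (with $\mathcal V^2$ carrying a $G$-invariant measure), and combine it with equidistribution of expanding horospheres. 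This gives
\[
\mathrm{Leb}\bigl(\{X:\exists (Q,P),\ \infnorm Q\sim 2^k\}\bigr)\ll 2^{k\varsigma}\psi(2^k)^{\varsigma}.
\]
Summing over $k$ yields convergence by Cauchy condensation.

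\textbf{Divergence half.} We first use Theorem \ref{thm:diri} and a standard reduction to assume $\psi(q)\le c_d/q$, replacing $\psi$ by $\min(\psi,c_d/q)$ if necessary; the series still diverges in that case. For each $k$, let $E_k$ be the set of $X$ admitting a good primitive frame with $\infnorm Q\sim 2^k$. By the lower bound version of the same mean value computation, $\mathrm{Leb}(E_k)\gg 2^{k\varsigma}\psi(2^k)^{\varsigma}$, so $\sum_k\mathrm{Leb}(E_k)=\infty$. We then need quasi-independence on average, namely $\mathrm{Leb}(E_k\cap E_l)\ll \mathrm{Leb}(E_k)\mathrm{Leb}(E_l)$ for $|k-l|$ large, or alternatively a second moment bound $\int\bigl(\sum_k\widehat{f_k}(a_{t_k}u_X\Gamma)\bigr)^2\dd X\ll\bigl(\sum_k\int\widehat{f_k}\bigr)^2$. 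This would come from effective double equidistribution, i.e. exponential mixing of $a_t$ on $G/\Gamma$ with smoothed $f_k$, together with a Rogers-type second moment formula for the frame Siegel transform to control the diagonal terms. The Kochen--Stone lemma then gives $\mathrm{Leb}(W(\psi))>0$. Finally, $W(\psi)$ is invariant under the action of $\Sym_d(\ZZ)$ translations and of suitable rational symplectic changes, since $(Q,P)\mapsto(Q,P+QS)$ preserves the Lagrangian condition when $S$ is symmetric integral. An ergodicity or Lebesgue density argument, in the style of Cassels' zero-one law, then upgrades positive measure to full measure.

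\textbf{Expected main obstacle.} The main difficulty is the analysis of the frame Siegel transform. One must establish its integrability and its $L^2$-norm, in particular a Rogers-type second moment formula, where pairs of frames sharing partial sublattices contribute degenerate terms. One must also show that $\widehat f$ for the shrinking boxes $B_t$ is not dominated by cusp excursions. I would first try to bound $\widehat f$ by a height function built from the successive minima via reduction theory of the Siegel upper half-space, and show this height lies in $L^{2-\epsilon}$. If the full $L^2$ bound fails, I would fall back to truncating at the cusp and using the convergence half to discard the truncated part.
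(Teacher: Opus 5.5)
Your outline matches the paper's: Dani correspondence, a frame Siegel transform with a mean-value formula for the convergence half, and a mixing-based Borel--Cantelli argument for the divergence half. The convergence half is fine. It needs only injectivity of a small box of $G$ into $\mathcal X$ and coarse invariance of $\Delta$, not equidistribution.

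The gap is the step you call ``the lower bound version of the same mean value computation'': no such lower bound exists. The identity $\int_{\mathcal X}\widehat f\,\dd m_{\mathcal X}=\int_{\mathcal V^2} f\,\dd m_{\mathcal V^2}$ bounds $m_{\mathcal X}(\{\widehat f\ge 1\})$ only from above. A lower bound, and with it $\mathrm{Leb}(E_k)\gg(2^k\psi(2^k))^{\varsigma}$ and $\sum_k\mathrm{Leb}(E_k)=\infty$, requires knowing that $\widehat f$ is not concentrated on a small set. For the naive frame count this is false. The cause is not frames sharing partial sublattices. It is the $\GL_d(\ZZ)$-multiplicity of bases of one and the same Lagrangian sublattice. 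A lattice containing a round primitive Lagrangian sublattice at scale $\lambda\ll\varepsilon$ has $\asymp(\varepsilon/\lambda)^{d(d-1)}$ primitive frames in the $\varepsilon$-box, while such lattices occupy measure $\asymp\lambda^{d(d+1)}$. Hence $\widehat f\notin L^p$ for $p\ge(d+1)/(d-1)$, with these consequences:
\begin{itemize}
\item the second moment is infinite for $d\ge 3$, as the paper itself remarks;
\item there is no finite Rogers-type formula to control the diagonal terms of $\int(\sum_k\widehat{f_k})^2$;
\item your $L^{2-\epsilon}$ height-function fallback is unavailable for $d\ge4$;
\item cusp truncation does not help, because the overcount lives precisely in the cusp region you are trying to measure.
\end{itemize}

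The missing input is an actual proof of $m_{\mathcal X}(A_R)\gg e^{-\varrho R}$. The paper obtains it (Corollary~\ref{cor:lbd}) in three steps.
\begin{enumerate}
\item Since $\omega(\Lambda,\Lambda)\subset\ZZ$, Cauchy--Schwarz forces all primitive Lagrangian frames of sup-norm $<\varepsilon<1/\sqrt{2d}$ in $\Lambda$ to span a single Lagrangian subspace. They therefore form one $\GL_d(\ZZ)$-orbit (Lemma~\ref{lem:unique_small_lag}, Corollary~\ref{cor:lag}).
\item Counting only rigid frames (unique Frobenius minimizers in their orbit modulo signed permutations) caps the count at $|\mathrm W|$. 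Then $\mathbb E[(N'_\varepsilon)^2]=|\mathrm W|\,\mathbb E[N'_\varepsilon]$, and Paley--Zygmund applies.
\item An explicit open cone of nearly orthogonal frames with comparable row lengths lies in $\Omega_0$. Together with the dilation law $m_{\mathcal V^2}(\varepsilon D)=\varepsilon^{\varrho}m_{\mathcal V^2}(D)$, this shows rigid frames carry a fixed proportion of $m_{\mathcal V^2}(B_\varepsilon)$.
\end{enumerate}
A Siegel-set computation would also work, with no moments at all. In the coordinates $g=n\,u\,a_{\mathbf t}k$, take $n,u$ in compact sets and $t_1\lesssim\cdots\lesssim t_d\le\log\varepsilon$. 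This region has Haar volume $\asymp\varepsilon^{d^2+d}$, projects boundedly-to-one to $\mathcal X$, and lands in $A_{R-O(1)}$ because $Fg=(0,ue^{\mathbf t})k$.

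Once the two-sided estimate is in hand, apply mixing to smoothed indicators of the sets $A_r$, which are bounded functions, rather than to Siegel transforms. This is exactly what Theorem~\ref{thm:BC} packages. It yields full measure directly, so your separate Cassels-type zero-one upgrade, which as sketched rests on unspecified ``rational symplectic changes'', is not needed.
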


\begin{remark}
    Without loss of generality, we may restrict the target matrix $X$ to $\Sym_d(\RR) \cap [0, 1)^{d \times d}$. Indeed, for any $Y \in \Sym_d(\ZZ)$, if $(Q, P)$ is an approximating frame for $X$, then $(Q, P - QY)$ serves as a natural approximating frame for $X + Y$. This new pair remains an integral Lagrangian frame since it is obtained via a symplectic group action,
    \[
    (Q, P - QY) = (Q, P)\begin{pmatrix}
        I_d & -Y \\ 0 & I_d
    \end{pmatrix}.
    \] 
\end{remark}

We prove Theorem \ref{thm:sk} using dynamics. The connection between Diophantine approximation and homogeneous dynamics was developed by Dani \cite{Dan85}, and has subsequently provided a framework for metric Diophantine approximation (see, e.g., \cite{KM98, KW08}). In particular, the classical Khintchine--Groshev theorem can be formulated as a shrinking target problem in the dynamics of $\SL_{m+n}(\RR)$-homogeneous spaces (see, \cite{KM99}). Their major observation is that a distance-like property, which is the exponential scaling of the cuspidal volume of a homogeneous space, implies the Borel--Cantelli property of the shrinking target problem. 

\subsection{Proof strategy}
In this paper, Theorem \ref{thm:diri} can be obtained by using the reduction theory for Siegel upper half space $\mathfrak{H}_d$ \cite{Sie43}. The action of the lattice subgroup $\Gamma = \Sp_{2d}(\ZZ)$ defines a fundamental domain $\mathcal{F}$ such that for every point $Z$ in $\mathfrak{H}_d$, there exists an element $\gamma \in \Gamma$ satisfying $\gamma \cdot Z \in \mathcal{F}$. The geometric properties of the Siegel reduced domain containing $\mathcal{F}$ imply Theorem \ref{thm:diri}.

The proof of Theorem \ref{thm:sk} relies on translating the $\psi$-approximability into a shrinking-target problem on the homogeneous space of symplectic lattices, $\mathcal{X} = \Gamma \backslash G$. By a Dani-type correspondence in Proposition \ref{prop:dani}, the $\psi$-approximability of a symmetric matrix $X \in \Sym_d(\RR)$ is interpreted as the excursion of a one-parameter flow, acting on a lattice associated with $X$, into the cuspidal regions of $\mathcal{X}$. The existence of an approximating Lagrangian frame $(Q, P)$ corresponds to this flow hitting a shrinking target deep within the cusp of $\mathcal{X}$.

To describe the cuspidal region rigorously, we define a distance-like function $\Delta: \mathcal{X} \to \RR$ (Definition \ref{def:delta}) that measures how deeply a point is located in the cusp. Geometrically, $\Delta(x)$ quantifies the length of the shortest primitive Lagrangian frame contained in the symplectic lattice $x$. We show that $\Delta$ satisfies a $\varrho$-distance-like property with exponent $\varrho = d^2 + d$, meaning 
\[
m_{\mathcal{X}}(\{x \in \mathcal{X}: \Delta(x) > R\}) \asymp e^{-\varrho R},
\]
where $m_{\mathcal{X}}$ is the $G$-invariant probability measure on $\mathcal{X}$. This exponential decay rate determines the critical exponent $\varsigma = d(d+1)/2$ in the symplectic Khintchine-Groshev's law.

To establish this precise volume estimate, we introduce a new Siegel transform for Lagrangian frames (Definition \ref{def:Siegel_transform}). This transform allows us to relate the integration on the space of symplectic lattices $\mathcal{X}$ to the integration over the algebraic subvariety of Lagrangian frames, reducing the cusp-volume computation to a moment estimation via Siegel mean-value formula (Proposition \ref{prop:siegel_mean_value_formula}). Once the distance-like property of $\Delta$ is established via this new Siegel transform and its second-moment analysis, Theorem \ref{thm:sk} follows from the dynamical Borel--Cantelli machinery developed by Kleinbock and Margulis \cite{KM99}.

\subsection{New difficulties: from point counting to frame counting}

The most technical issue in the proof of Theorem~\ref{thm:sk} is to estimate the second moment of a new Siegel transform. In the standard proofs of dynamical Khintchine--Groshev's theorem on $\SL_{m + n}(\ZZ) \backslash \SL_{m + n}(\RR)$, it suffices to show that there exists a single short lattice vector in the orbit of unimodular lattices. One can use the Siegel transform to reduce the cusp-volume estimate to a lattice point counting problem. Since Rogers' second moment formula is given in this setting \cite{Rog55}, the measure of an $\varepsilon$-cusp is governed by the Lebesgue volume of an $\varepsilon$-cube around the origin in $\RR^{m + n}$.

However, our setting requires counting collections of vectors that form a primitive Lagrangian frame $(Q, P)$. This shift from point counting to frame counting introduces two difficulties. 

First, the relevant space is not a linear space but a subvariety $\mathcal{V}^2 \subset \mathrm{Mat}_{d \times 2d}(\RR)$ cut out by the symplectic relation $QP\trans = PQ\trans$. As a result, the $G$-invariant measure on $\mathcal{V}^2$ does not scale like the Euclidean volume $\varepsilon^{\dim \mathcal{V}^2}$ in the cuspidal region. 

Second, frame counting causes a combinatorial blow-up. To bound the error terms in our volume estimates, the second moment of the frame counting Siegel transform must be finite. However, even if we restrict our attention to counting only ``short" frames, a single lattice can contain a large number of different combinations of short vectors that form valid frames. This overcounting causes the naive second moment of the Siegel transform to diverge, making classical integration techniques useless.

To overcome this divergence, it is necessary to eliminate the redundant combinations. We achieve this by exploiting the symplectic structure of the lattices (Corollary \ref{cor:lag}) and introducing the notion of a \emph{rigid frame} (Definition \ref{def:rigid_frame}). By counting only these rigid frames, we regularize the counting problem, keep the second moment finite, and deduce the exact asymptotic behavior of the cusp volume.

\subsection*{Organization of the paper}

The paper is organized as follows. Section \ref{sec:homo_symp} sets up the geometric and algebraic foundations: we review the reduction theory of the Siegel upper half-space and use it to prove Theorem~\ref{thm:diri}; we identify the homogeneous space $\mathcal{X} = \Sp_{2d}(\ZZ) \backslash \Sp_{2d}(\RR)$ with the space of symplectic lattices and reformulate the Diophantine approximation problem in the language of Lagrangian frames; and we introduce the Siegel transform for Lagrangian frames and identify the relevant invariant measures. Section \ref{sec:dynamical_khintchine} then completes the proof of Theorem~\ref{thm:sk}: we establish a Dani-type correspondence converting $\psi$-approximability into a shrinking-target problem, prove the matching upper and lower bounds for the cusp volume, and apply the dynamical Borel--Cantelli lemma of Kleinbock--Margulis.

\subsection*{Notation}

We write $A \ll B$ (or $A = O(B)$) to mean $A \leq C B$ for some constant $C > 0$ depending only on $d$, and $A \asymp B$ to mean $A \ll B \ll A$. We denote by $\|\cdot\|_{\infty}$ the supremum norm on matrices, by $\|\cdot\|_{\mathrm{op}}$ the operator norm induced by the Hermitian inner product on $\CC^d$, and by $\|\cdot\|_F$ the Frobenius norm. The standard symplectic form on $\RR^{2d}$ is denoted $\omega$, and $J = \begin{pmatrix} 0 & -I_d \\ I_d & 0 \end{pmatrix}$ is its Gram matrix. We write $G = \Sp_{2d}(\RR)$, $\Gamma = \Sp_{2d}(\ZZ)$, and $\mathcal{X} = \Gamma \backslash G$; the Haar probability measure on $\mathcal{X}$ is denoted $m_{\mathcal{X}}$.

\subsection*{Acknowledgements}

The author would like to thank Seonhee Lim for her guidance and helpful discussion. The author is supported by National Research Foundation of Korea, under project number RS-2025-00515082 and RS-2025-02293115.


\section{Homogeneous Spaces of Symplectic group}\label{sec:homo_symp}

In this section, we establish the geometric and algebraic foundations to translate the Diophantine approximation problem in the symmetric matrix algebra into a dynamical problem on a homogeneous space. To this end, we describe the structures and properties of the principal spaces associated with the symplectic group $\Sp_{2d}(\mathbb{R})$.

We first review the reduction theory on the Siegel upper half-space, and we identify the homogeneous space as the space of symplectic lattices. We reformulate the approximation problem using Lagrangian frames. Finally, we analyze the homogeneous structure and the invariant measure of the space of Lagrangian frames, and introduce the Siegel transform as a systematic tool to study the occurrence of short frames within symplectic lattices.

The geometric, measure-theoretic tools developed here lay for the dynamical proof of the Khintchine-Groshev-type theorem for the symmetric matrices in Section \ref{sec:dynamical_khintchine}.

\subsection{Siegel Upper Half-Space.}
The Siegel upper half-space $\mathfrak{H}_d$ was introduced by Siegel as a natural higher-rank generalization of the upper half-plane $\mathfrak{H}_1 = \mathbb{H}^2$ \cite{Sie39}. To prove the Dirichlet-type theorem for the matrix algebra, we use the reduction theory on the Siegel upper half-space. 

\begin{definition}[Siegel upper half-space and symplectic group action]

The \emph{Siegel upper half-space} is the set
\[
\mathfrak{H}_d=\{Z=X+iY\in\Sym_d(\CC): Y \text{ is positive definite}\}.
\]
A matrix $g=\begin{pmatrix} A & B \\ C & D \end{pmatrix}$ is called \emph{symplectic} if it satisfies $g\trans Jg=J$. 

The action of $G = \Sp_{2d}(\RR)$ on $\mathfrak{H}_d$ is given by
\[
g\cdot Z=(AZ+B)(CZ+D)^{-1}.
\]
\end{definition}

This generalizes the classical M\"obius action on the upper half-plane. Analogously, the action of $\Sp_{2d}(\RR)$ on $\mathfrak{H}_d$ is transitive, and one has the canonical identification $$\mathfrak{H}_d \simeq G/K$$ where $K$ is a maximal compact subgroup of $G$.

Let $\Gamma = \Sp_{2d}(\ZZ)$ be the Siegel modular group. Since $\Gamma$ acts properly discontinuously on $\mathfrak{H}_d$, it admits a fundamental domain $\mathcal{F}_d \subset \mathfrak{H}_d$. An explicit fundamental domain was constructed by Siegel \cite{Sie43} using Minkowski reduction on the space of positive definite symmetric matrices (see also, \cite{Kli90}).

\begin{definition}[Minkowski reduced cone/domain]
A positive definite $d \times d$ symmetric matrix $Y$ is called \emph{Minkowski reduced} if it satisfies:
\begin{itemize}
    \item[M1] $v\trans Y v \geq Y_{kk}$ for $v \in \ZZ^d$ and $\mathrm{gcd}(v_k, v_{k+1}, \dots, v_d) = 1$, $1 \leq k \leq d$.
    \item[M2] $Y_{k, k+1} \geq 0$, $1 \leq k < d$.
\end{itemize}

\end{definition}

Any $d \times d$ Minkowski reduced matrix $Y$ satisfies
\begin{equation}
\det(Y) \leq Y_{11} \dots Y_{dd} \leq C_d \det(Y), \qquad C_d = (\frac{4}{3})^{d(d-1)/2}.\label{eq:orthdef}
\end{equation}

This explicit constant $C_d$ was first introduced by Hermite \cite{Her50}. While the optimal constants have been explicitly determined for small dimensions $d \leq 5$ (See, \cite{Bar78, BT82}), a general formula for the optimal bound for an arbitrary natural number $d$ remains unknown. 

The following lemma establishes a lower bound for any Minkowski reduced matrix in terms of its diagonal entries. While the proof is essentially identical to that in \cite{Fre83}, we include it here to explicitly keep track of the constants.

\begin{lemma}[Folgerung 2.6 \cite{Fre83}]\label{lem:alpha}
    Let $Y$ be a Minkowski reduced symmetric matrix. Then there exists a constant $\alpha_d = \left(\frac{3}{4}\right)^{d(d - 1)/2} \left(\frac{d-1}{d}\right)^{d-1}$, where $\alpha_1 = 1$, depending only on $d$ such that
    \[
    Y \succeq \alpha_d \diag(Y_{11}, \dots, Y_{dd})
    \]
    where $\succeq$ denotes the Loewner order: $A \succeq B$ iff $A - B$ is positive semi-definite.
\end{lemma}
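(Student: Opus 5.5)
The plan is to normalize away the diagonal and reduce the Loewner inequality to a lower bound on the smallest eigenvalue of a matrix with unit diagonal. That bound will come from two facts: the determinant is controlled by \eqref{eq:orthdef}, and the trace is fixed.

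Let $D = \diag(Y_{11}, \dots, Y_{dd})$. Since $Y$ is positive definite, every $Y_{kk} > 0$, so $D^{1/2}$ is invertible. Put $Z = D^{-1/2} Y D^{-1/2}$. This is a positive definite symmetric matrix with $Z_{kk} = 1$ for all $k$. The inequality $Y \succeq \alpha_d D$ is equivalent, after congruence by $D^{-1/2}$, to $Z \succeq \alpha_d I_d$. That is, it suffices to show $\lambda_1 \geq \alpha_d$, where $0 < \lambda_1 \leq \dots \leq \lambda_d$ are the eigenvalues of $Z$. The case $d = 1$ is trivial, since then $Z = 1$ and $\alpha_1 = 1$. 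So we assume $d \geq 2$.

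Next we collect the two pieces of information on the eigenvalues. First, by \eqref{eq:orthdef},
\[
\det Z = \frac{\det Y}{Y_{11}\cdots Y_{dd}} \geq C_d^{-1} = \left(\tfrac{3}{4}\right)^{d(d-1)/2}.
\]
Second, $\tr Z = d$ because the diagonal of $Z$ is all ones, so $\lambda_2 + \dots + \lambda_d = d - \lambda_1 \leq d$. Applying AM--GM to the $d-1$ positive numbers $\lambda_2, \dots, \lambda_d$ gives
\[
\lambda_2 \cdots \lambda_d \leq \left(\frac{d - \lambda_1}{d-1}\right)^{d-1} \leq \left(\frac{d}{d-1}\right)^{d-1}.
\]
Combining the two bounds,
\[
\lambda_1 = \frac{\det Z}{\lambda_2\cdots\lambda_d} \geq \left(\tfrac{3}{4}\right)^{d(d-1)/2}\left(\frac{d-1}{d}\right)^{d-1} = \alpha_d,
\]
which is exactly the claimed constant.

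No real obstacle is expected. The only substantive input is Hermite's bound \eqref{eq:orthdef}, which we take as given from the text above. The remaining points are routine checks: the congruence step preserves the Loewner order (for invertible $S$, $A \succeq B$ iff $S\trans A S \succeq S\trans B S$), and dropping the $-\lambda_1$ in the AM--GM bound is legitimate since $\lambda_1 > 0$.
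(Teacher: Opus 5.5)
Your proof is correct and follows the paper's argument essentially step for step: normalize to the unit-diagonal matrix $D^{-1/2} Y D^{-1/2}$, bound its determinant below via \eqref{eq:orthdef}, bound $\lambda_2 \cdots \lambda_d$ above by AM--GM using $\tr = d$, and divide to get $\lambda_1 \geq \alpha_d$. Your separate treatment of $d = 1$ and your explicit remark that congruence by an invertible matrix preserves the Loewner order are small clarifications that the paper leaves implicit.
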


\begin{proof}
Let $D = \diag(Y_{11}^{1/2}, \dots, Y_{dd}^{1/2})$ and define the normalized matrix $B = D^{-1}YD^{-1}$. Since $Y$ is positive definite, $B$ remains positive definite. Furthermore, all diagonal entries of $B$ are equal to $1$, which implies $\tr(B) = d$. 

Let $0 < \lambda_1 \leq \lambda_2 \leq \dots \leq \lambda_d$ denote the eigenvalues of $B$. By \eqref{eq:orthdef}, the determinant of $B$ satisfies
\[
\det(B) = \prod_{i=1}^{d} \lambda_i = \frac{\det(Y)}{Y_{11} \dots Y_{dd}} \geq \left(\frac{3}{4}\right)^{d(d-1)/2}.
\]
Observe that $\sum_{i=2}^{d} \lambda_i = d - \lambda_1 < d$. Applying the arithmetic-geometric mean inequality to the $d-1$ eigenvalues $\lambda_2, \dots, \lambda_d$, we obtain
\[
\prod_{i=2}^{d} \lambda_i \leq \left( \frac{1}{d-1} \sum_{i=2}^{d} \lambda_i \right)^{d-1} < \left(\frac{d}{d-1}\right)^{d-1}.
\]
Combining with the determinant inequality yields
\[
\left(\frac{3}{4}\right)^{d(d-1)/2} \leq \lambda_1 \prod_{i=2}^{d} \lambda_i < \lambda_1 \left(\frac{d}{d-1}\right)^{d-1}.
\]
Consequently, the smallest eigenvalue $\lambda_1$ is strictly bounded below by the constant $\alpha_d := \left(\frac{3}{4}\right)^{d(d-1)/2} \left(\frac{d-1}{d}\right)^{d-1}$. This establishes $B \succeq \alpha_d I_d$, which is equivalent to the desired inequality $Y \succeq \alpha_d \diag(Y_{11}, \dots, Y_{dd}).$
\end{proof}

Now consider the Siegel reduced domain of $\Gamma \backslash \mathfrak{H}_d$.

\begin{theorem}[Siegel reduced domain]\label{thm:sie}
    Let $Z = X + iY \in \mathfrak{H}_d$ lie in the fundamental domain $\mathcal{F}_d$. Then it satisfies:
    \begin{itemize}
        \item[S1] 
        The imaginary part $Y = \mathrm{Im}(Z)$ is Minkowski reduced.
        \item[S2]
        $|X_{ij}| \leq 1/2$ for $1 \leq i, j \leq d$ where $X = (X_{ij})$
        \item[S3] 
        $\det(\mathrm{Im}(\gamma \cdot Z)) \leq \det(\mathrm{Im}(Z))$ for all $\gamma \in \Gamma$, equivalently, for every $\gamma = \begin{pmatrix}
            A & B \\ C & D
        \end{pmatrix} \in \Gamma$, we have
        \[
        |\det(CZ + D)| \geq 1.
        \]
        This follows from $\mathrm{Im}(\gamma \cdot Z) = ((C\bar{Z} + D)^{-1})\trans \mathrm{Im}(Z)(CZ + D)^{-1}$.

    \end{itemize}
\end{theorem}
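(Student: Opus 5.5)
We take $\mathcal{F}_d$ to be the explicit fundamental domain constructed by Siegel \cite{Sie43} (see also \cite{Kli90}), since properties S1--S3 are not true for an arbitrary fundamental domain. Siegel's domain is cut out by three families of conditions, each adapted to one piece of $\Gamma$. The first is maximality of $\det \mathrm{Im}$ along the $\Gamma$-orbit. The second is a reduction condition for the Levi subgroup $\{\diag(U, U^{-\mathrm{T}}) : U \in \GL_d(\ZZ)\}$. The third is a reduction condition for the unipotent subgroup $\{\begin{pmatrix} I_d & B \\ 0 & I_d\end{pmatrix} : B \in \Sym_d(\ZZ)\}$. The plan is to check, for $Z = X+iY \in \mathcal{F}_d$, that each family of conditions gives exactly one of S1--S3. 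Only S3 needs a computation, namely the transformation rule for the imaginary part.

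\textbf{Step 1 (the formula for $\mathrm{Im}(\gamma\cdot Z)$).} Let $g = \begin{pmatrix} A&B\\ C&D\end{pmatrix} \in G$. The relation $g\trans J g = J$ gives three identities: $A\trans C$ is symmetric, $B\trans D$ is symmetric, and $A\trans D - C\trans B = I_d$. Write $M = CZ+D$ and $W = g\cdot Z = (AZ+B)M^{-1}$. First we check that $W$ is symmetric, which amounts to $(AZ+B)\trans(CZ+D) = (CZ+D)\trans(AZ+B)$; this follows from the three identities together with $Z\trans = Z$. Next we compute
\[
\bar M\trans (W - \bar W) M = \bar M\trans(AZ+B) - (A\bar Z + B)\trans M .
\]
Expanding the right-hand side and using the same identities, it collapses to $Z - \bar Z = 2iY$. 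This yields $\mathrm{Im}(g\cdot Z) = (\bar M\trans)^{-1} Y M^{-1}$, which is the formula stated in S3. Before dividing by $M$ we also need $\det M \neq 0$. This holds because $Y \succ 0$ forces $Mv = 0 \Rightarrow \bar v\trans Y v = 0$; to see this, pair $v$ with the displayed identity, using that $\bar M \bar v = 0$ as well. Taking determinants gives
\[
\det \mathrm{Im}(\gamma\cdot Z) = \frac{\det Y}{|\det(CZ+D)|^2}.
\]
I expect this bookkeeping with the symplectic identities to be the main (though routine) obstacle.

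\textbf{Step 2 (S3).} Siegel's first defining condition is that $\det \mathrm{Im}(Z) \geq \det \mathrm{Im}(\gamma\cdot Z)$ for all $\gamma \in \Gamma$. A maximizer exists in every orbit because, for fixed $Z$, the set $\{CZ+D\}$ coming from $\Gamma$ is discrete, so $|\det(CZ+D)|$ takes only finitely many values below $1$. By Step 1 this condition is equivalent to $|\det(CZ+D)| \geq 1$ for every $(C,D)$ occurring as the bottom block row of some $\gamma \in \Gamma$. This is S3.

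\textbf{Step 3 (S1 and S2).} Take $\gamma = \diag(U, U^{-\mathrm{T}})$ with $U \in \GL_d(\ZZ)$. Then $\gamma\cdot Z = UZU\trans$, so $Y \mapsto UYU\trans$ and $|\det(CZ+D)| = |\det U^{-\mathrm{T}}| = 1$; in particular the condition in Step 2 is preserved. Siegel's second condition asks that $Y$ be Minkowski reduced, i.e.\ that $Y$ lie in the Minkowski domain satisfying M1--M2. This is exactly S1.

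Now take $\gamma = \begin{pmatrix} I_d & B\\ 0 & I_d\end{pmatrix}$ with $B \in \Sym_d(\ZZ)$. This acts by $Z \mapsto Z + B$, fixing $Y$ and therefore preserving the first two conditions. Siegel's third condition normalizes $X$ modulo $\Sym_d(\ZZ)$ to the box $|X_{ij}| \leq 1/2$, which is S2.

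Since $Z \in \mathcal{F}_d$ satisfies all three defining conditions, it satisfies S1--S3.
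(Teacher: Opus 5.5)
Your proposal is correct and follows the paper's route. The paper simply cites Siegel's construction, in which S1--S3 are exactly the defining conditions of $\mathcal{F}_d$; you additionally verify the transformation law $\mathrm{Im}(\gamma\cdot Z)=((C\bar Z+D)^{-1})\trans Y (CZ+D)^{-1}$, including $\det(CZ+D)\neq 0$, which the paper only asserts for the equivalence in S3. The existence-of-a-maximizer remark in Step 2 is not needed for the statement, and discreteness of $\{CZ+D\}$ alone would not justify it: one needs finiteness of rank-$d$ sublattices of bounded covolume, since $|\det(CZ+D)|^2$ is the Gram determinant of the rows of $(CX+D,\ CY^{1/2})$. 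This does not affect your argument.
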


\begin{proof}
    See Lemmas 6--9 in \cite{Sie43}.
\end{proof}

The following lemma is the basic property of the matrix in $\mathcal{F}_d$.

\begin{lemma}[Hilfssatz 2.11 \cite{Fre83}]\label{lem:hil}
    Let $Z = X + iY$ be a matrix satisfying (S2) and (S3) in \ref{thm:sie}. Then it satisfies
    \[
    Y_{ii} \geq \frac{\sqrt{3}}{2}, \qquad i = 1, \dots, d.
    \]
\end{lemma}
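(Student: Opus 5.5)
The idea is to apply condition (S3) to a well-chosen element $\gamma \in \Gamma$ that embeds the classical $\SL_2(\ZZ)$ inversion into the $i$-th coordinate, and then to use (S2) to control the real part. Concretely, fix $i \in \{1,\dots,d\}$. Let $E_{ii}$ denote the matrix unit. Take $\gamma = \begin{pmatrix} A & B \\ C & D \end{pmatrix}$ with
\[
A = D = I_d - E_{ii}, \qquad B = -E_{ii}, \qquad C = E_{ii}.
\]
We first check that $\gamma \in \Sp_{2d}(\ZZ)$. This holds because it acts as $\begin{pmatrix} 0 & -1 \\ 1 & 0\end{pmatrix}$ on the symplectic plane spanned by $e_i, e_{d+i}$ and as the identity on its symplectic complement. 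Equivalently, one verifies $A\trans C$ and $B\trans D$ symmetric and $A\trans D - C\trans B = I_d$ directly.

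Next, compute $CZ + D = E_{ii}Z + I_d - E_{ii}$. This matrix agrees with the identity except in its $i$-th row, which equals the $i$-th row of $Z$. Expanding the determinant along that row (or noting it is triangular after permutation: all other rows are standard basis rows), we get $\det(CZ + D) = Z_{ii} = X_{ii} + iY_{ii}$. By (S3), $|Z_{ii}| \geq 1$, that is,
\[
X_{ii}^2 + Y_{ii}^2 \geq 1.
\]

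Finally, (S2) gives $|X_{ii}| \leq 1/2$. Hence $Y_{ii}^2 \geq 1 - 1/4 = 3/4$, and since $Y_{ii} > 0$ by positive definiteness, $Y_{ii} \geq \sqrt{3}/2$.

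The only step requiring care is confirming the symplectic condition for $\gamma$ with the paper's convention $g\trans J g = J$. The block identities are the standard ones, and they check out by the embedding argument above. The remaining steps are routine.
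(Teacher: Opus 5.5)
Your proof is correct: the embedded inversion satisfies the block symplectic identities, $\det(CZ+D)=Z_{ii}$, and (S3) together with (S2) and $Y_{ii}>0$ gives $Y_{ii}\geq \sqrt{3}/2$. The paper gives no proof of its own and only cites Freitag's Hilfssatz 2.11, which is the standard argument via this embedded copy of the $\SL_2(\ZZ)$ inversion in $\Sp_{2d}(\ZZ)$, so your route is essentially the cited one.
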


This yields a uniform lower bound for the eigenvalues of $Y$.

\begin{theorem}[Satz 2.12 \cite{Fre83}]\label{thm:fre}
    Let $Z = X + iY \in \mathcal{F}_d$. Then there exists a constant $c_d > 0$ which depends only on $d$ so that 
    \[
    Y \succeq c_d^{-1} I_d
    \] where $\succeq$ denotes the Loewner order: $A \succeq B$ iff $A-B$ is positive semidefinite.
\end{theorem}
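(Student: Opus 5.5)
The plan is to combine Lemma \ref{lem:alpha} with Lemma \ref{lem:hil}, using property (S1) of Theorem \ref{thm:sie}.

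Let $Z = X + iY \in \mathcal{F}_d$. By Theorem \ref{thm:sie}, $Y$ is Minkowski reduced (S1), and $Z$ satisfies (S2) and (S3).

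First, Lemma \ref{lem:alpha} applies to $Y$ by (S1) and gives the Loewner bound
\[
Y \succeq \alpha_d \,\diag(Y_{11}, \dots, Y_{dd}).
\]
Second, since $Z$ satisfies (S2) and (S3), Lemma \ref{lem:hil} gives $Y_{ii} \geq \sqrt{3}/2$ for every $i$. Hence
\[
\diag(Y_{11}, \dots, Y_{dd}) \succeq \tfrac{\sqrt{3}}{2} I_d .
\]
The Loewner order is transitive and preserved under multiplication by the positive scalar $\alpha_d$. Chaining the two inequalities therefore yields
\[
Y \succeq \tfrac{\sqrt{3}}{2}\alpha_d I_d .
\]
Set
\[
c_d := \frac{2}{\sqrt{3}\,\alpha_d} = \frac{2}{\sqrt{3}}\left(\frac{4}{3}\right)^{d(d-1)/2}\left(\frac{d}{d-1}\right)^{d-1},
\]
with the convention $c_1 = 2/\sqrt{3}$. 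This constant depends only on $d$, and it gives $Y \succeq c_d^{-1} I_d$, which is the statement of Theorem \ref{thm:fre}.

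Given the earlier results, no step here is a real obstacle. The only substantive input is Lemma \ref{lem:hil}, which we are allowed to assume. If one wanted to reprove it, the plan would be to apply (S3) to the embedded $\SL_2(\ZZ)$-element acting on the $i$-th coordinate. This is the symplectic matrix with $C = E_{ii}$, $D = I_d - E_{ii}$ (and suitable $A$, $B$). For it, $\det(CZ + D) = z_{ii}$, so (S3) gives $|z_{ii}| \geq 1$. Combined with $|X_{ii}| \leq 1/2$ from (S2), this gives $Y_{ii}^2 \geq 1 - 1/4$, i.e. $Y_{ii} \geq \sqrt{3}/2$.
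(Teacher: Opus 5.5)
Your proposal is correct and is essentially the paper's own proof: it chains Lemma~\ref{lem:alpha} (applicable by (S1)) with Lemma~\ref{lem:hil} to get $Y \succeq \tfrac{\sqrt{3}}{2}\alpha_d I_d$, and then sets $c_d^{-1} = \tfrac{\sqrt{3}}{2}\alpha_d$. Your optional sketch of Lemma~\ref{lem:hil}, using the embedded $\SL_2(\ZZ)$ element with $C = E_{ii}$ and $D = I_d - E_{ii}$ so that $\det(CZ+D) = z_{ii}$, is also correct, although the paper simply cites that lemma.
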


\begin{proof}
    Combining Lemma~\ref{lem:alpha} with Lemma~\ref{lem:hil}, we have
    \[
    Y \succeq \alpha_d \diag(Y_{11}, \dots, Y_{dd}) \succeq \frac{\sqrt{3}}{2}\alpha_d.
    \]
    Take the constant $c_d^{-1}$ to be $\frac{\sqrt{3}}{2}\alpha_d$.
\end{proof}

\begin{proof}[Proof of Theorem~\ref{thm:diri}]
Throughout, $\opnorm{\cdot}$ denotes the operator norm induced by the standard Hermitian inner product on $\CC^d$, which restricts to the Euclidean operator norm on $\RR^d$. Let $Z = X + ic_d^{-1}N^{-2}I_d \in \mathfrak{H}_d$. By Theorem~\ref{thm:sie}, there exists
\[
\gamma =\begin{pmatrix} A & B \\ C & D \end{pmatrix} \in \Sp_{2d}(\ZZ)
\]
such that $\gamma \cdot Z \in \mathcal{F}_d$. Write $\gamma \cdot Z = X' + iY'$. The classical transformation law for the imaginary part on the Siegel upper half-space reads
\begin{equation}\label{eq:imag_part}
Y' \;=\; ((C\bar Z + D)^{-1})\trans\, \mathrm{Im}(Z)\, (CZ + D)^{-1}.
\end{equation}
Taking inverses in \eqref{eq:imag_part}, with $\mathrm{Im}(Z) = c_d N^{-2}I_d$ and hence $\mathrm{Im}(Z)^{-1} = c_d^{-1} N^2 I_d$,
\[
(Y')^{-1} \;=\; c_d^{-1} N^2\, (CZ + D)(C\bar Z + D)^{\trans}.
\]
Expanding $CZ + D = (CX + D) + ic_d N^{-2}C$ and $(C\bar Z + D)^{\trans} = (CX + D)^{\trans} - ic_d N^{-2} C^{\trans}$ implies
\[
(CZ+D)(C\bar Z + D)^{\trans}
= (CX+D)(CX+D)^{\trans} + c_d^2 N^{-4}\, CC^{\trans}
+ ic_d N^{-2}\big[C(CX+D)^{\trans} - (CX+D)C^{\trans}\big].
\]
The bracketed term equals $CD^{\trans} - DC^{\trans}$, which vanishes by the symplectic relation $CD^{\trans} = DC^{\trans}$. Therefore
\begin{equation}\label{eq:Yprime_inv}
(Y')^{-1} \;=\; c_d^{-1} N^{2}\,(CX+D)(CX+D)^{\trans} + c_d N^{-2}\, CC^{\trans},
\end{equation}
which is real, symmetric, and positive semi-definite.

By Theorem~\ref{thm:fre}, $Y' \succeq c_d^{-1} I_d$, hence $(Y')^{-1} \preceq c_d I_d$. Applied to any $v \in \RR^d$, this gives
\[
c_d^{-1} N^2\|(CX+D)^{\trans} v\|_2^2 + c_d N^{-2}\|C^{\trans} v\|_2^2 \leq c_d \|v\|_2^2.
\]
Both summands on the left are non-negative, so each is individually bounded by $c_d \|v\|_2^2$. Taking suprema over $v$ (and using $\|M\|_{\mathrm{op}} = \|M^{\trans}\|_{\mathrm{op}}$):
\[
\opnorm{CX+D} \leq \frac{c_d}{N}, \qquad
\opnorm{C} \leq N.
\]
Taking $Q = C$ and $P = D$ yields the uniform bound of Theorem~\ref{thm:diri}. The infinite-family conclusion is the standard consequence: applying the uniform statement at successive values $N_1 < N_2 < \cdots \to \infty$ produces pairs $(Q_{N_k}, P_{N_k}) \in \mathcal{V}^2_{\ZZ}$ with $\opnorm{Q_{N_k} X + P_{N_k}} \leq c_d/N_k \to 0$, so infinitely many of these pairs must be distinct (unless $X$ admits a frame with $\opnorm{QX + P} = 0$, in which case the statement is automatic by taking integer multiples).
\end{proof}

\begin{remark}
Arguments relying on the reduction theory generally do not yield sharp bounds on the optimal value of $c_d$. Even in the one-dimensional case ($d=1$), the reduction-theoretic constant gives $c_1 = 4/3$, which falls short of the optimal constant $1/\sqrt{5}$ in Hurwitz's theorem \cite{Hur91}.
\end{remark}

\subsection{The Space of Symplectic Lattices}

In this subsection, we identify the homogeneous space $\mathcal{X} = \Gamma \backslash G$ as the space of unimodular symplectic lattices in $\RR^{2d}$ and establish the dynamical encoding of Diophantine approximation for symmetric matrices. We reformulate the approximation problem of symmetric matrices in terms of ``short'' Lagrangian lattice frames.

\begin{definition}[Symplectic lattices]\label{def:symplectic_lattice}
Let $V=\RR^{2d}$ be a $2d$-dimensional real vector space with the standard basis $\{e_1,\dots,e_d,f_1,\dots,f_d\}$. A bilinear form $\omega:V\times V\to\RR$ is called the \emph{standard symplectic form} if
\[
\omega(e_i,e_j)=\omega(f_i,f_j)=0,\qquad \omega(e_i,f_j)=\delta_{ij},
\]
where $\delta_{ij}$ denotes the Kronecker delta. A subspace $W\subset V$ is called \emph{isotropic} if $\omega(v,w)=0$ for all $v,w\in W$.
An $d$-dimensional isotropic subspace is called \emph{Lagrangian}. A basis $\{v_1,\dots,v_d,w_1,\dots,w_d\}$ of $V$ is called a \emph{symplectic basis} if
\[
\omega(v_i,v_j)=\omega(w_i,w_j)=0,\qquad \omega(v_i,w_j)=\delta_{ij}.
\] 

A lattice $\Lambda\subset V$ is called a \emph{symplectic lattice} if it admits a symplectic $\ZZ$-basis. 

For $v\in V$, a vector $w\in V$ is called a \emph{symplectically dual vector} of $v$ if $\omega(v,w)=1$. 

A nonzero lattice vector $v\in\Lambda$ is called \emph{primitive} if it is not a nontrivial integer multiple of another vector in $\Lambda$. 

\end{definition}

From Definition \ref{def:symplectic_lattice}, it follows that the homogeneous space $\mathcal{X} = \Gamma \backslash G$ is the space of symplectic lattices. Transitivity of the action comes from the fact that any row vectors of symplectic elements form a symplectic basis, and the stabilizer of the standard lattice $\ZZ^{2d}$ is given as $\Gamma \cap \SL_{2d}(\ZZ) = \Gamma$. The following lemmata are basic facts for symplectic lattices.

\begin{lemma}[Existence of the symplectically dual vector]\label{lem:exist_symp_dual}
Let $\Lambda$ be a symplectic lattice in $\RR^{2d}$. For any primitive vector $v \in \Lambda$, there exists a $w \in \Lambda$ which is the symplectically dual vector of $v$.
\end{lemma}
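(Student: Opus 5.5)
We will reduce to the standard lattice and then use unimodularity of the symplectic pairing. Since $\Lambda$ is symplectic, it has a symplectic $\ZZ$-basis $\{v_1,\dots,v_d,w_1,\dots,w_d\}$. Let $g \in G$ be the linear map sending the standard basis $\{e_1,\dots,e_d,f_1,\dots,f_d\}$ to this basis. Then $g$ preserves $\omega$ and maps $\ZZ^{2d}$ onto $\Lambda$. Primitive vectors correspond to primitive vectors under $g$, and $\omega(gx,gy)=\omega(x,y)$. Hence it suffices to treat $\Lambda = \ZZ^{2d}$.

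So let $v = (a_1,\dots,a_d,b_1,\dots,b_d) \in \ZZ^{2d}$ be primitive, so that $\gcd(a_1,\dots,a_d,b_1,\dots,b_d)=1$. For $w=(x_1,\dots,x_d,y_1,\dots,y_d)\in\ZZ^{2d}$, bilinearity and the defining relations of $\omega$ give
\[
\omega(v,w) = \sum_{i=1}^d (a_i y_i - b_i x_i).
\]
By Bézout's identity there are integers $s_1,\dots,s_d,t_1,\dots,t_d$ with $\sum_i (s_i a_i + t_i b_i) = 1$. We set $y_i = s_i$ and $x_i = -t_i$, which gives $\omega(v,w)=1$ with $w \in \ZZ^{2d}$. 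Transporting back by $g$, the vector $gw \in \Lambda$ satisfies $\omega(gv', gw) = 1$, where $v = gv'$.

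Equivalently, and more conceptually, the map $\Lambda \to \ZZ$, $w \mapsto \omega(v,w)$, is given by an integer row vector whose entries are the coordinates of $v$ (up to sign and permutation) in the symplectic basis. The image of this map is therefore the ideal generated by those coordinates, and it equals $\ZZ$ exactly when $v$ is primitive.

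The only point requiring any care is that primitivity of $v$ in $\Lambda$ means the gcd of its coordinates in a $\ZZ$-basis equals $1$. This holds because $v = m u$ with $u \in \Lambda$ and $m \geq 2$ if and only if $m$ divides all coordinates. This is routine, so no real obstacle is expected.
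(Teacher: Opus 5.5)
Your proof is correct, but it takes a different route from the paper's. You transport to $\ZZ^{2d}$ by the symplectic change of basis and write $\omega(v,\cdot)$ as the integer functional $(x,y)\mapsto \sum_i (a_i y_i - b_i x_i)$. Surjectivity then follows from B\'ezout, which also produces $w$ explicitly. The paper argues without coordinates: the image of $\varphi_v=\omega(v,\cdot)$ on $\Lambda$ is $p\ZZ$, with $p\ge 1$ by nondegeneracy. If $p\ge 2$, then $\frac1p v\in\Lambda^*_{\omega}$, hence $\frac1p v\in\Lambda$ by isoduality, contradicting primitivity. Both arguments rest on the same fact, namely that $\omega$ is unimodular on $\Lambda$. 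The paper's version isolates exactly that property and works verbatim for any lattice with $\Lambda=\Lambda^*_{\omega}$. Yours uses only the defining symplectic basis, which is an advantage here. The paper invokes $\Lambda=\Lambda^*_{\omega}$ as ``the symplectic assumption'' and afterwards says the lemma implies that all symplectic lattices are $\omega$-isodual. As written, that is circular unless one checks isoduality directly from a symplectic basis: if $\omega(u,\Lambda)\subset\ZZ$ and $u=\sum x_i v_i+\sum y_i w_i$, then $x_i=\omega(u,w_i)$ and $y_i=-\omega(u,v_i)$ are integers. Your argument never needs this step. One small notational slip: in the ``transporting back'' sentence, $v=gv'$ is garbled. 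With $v\in\ZZ^{2d}$ the coordinate vector, the lattice vector is $gv$, and the conclusion is $\omega(gv,gw)=\omega(v,w)=1$.
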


\begin{proof}
    Fix a primitive $v\in \Lambda$. Consider the homomorphism
\[
\varphi_v:\Lambda\to \ZZ,\qquad \varphi_v(x)=\omega(v,x).
\]
Since $\omega(\Lambda, \Lambda) = \ZZ$, the image $\varphi_v(\Lambda)$ is a subgroup of $\ZZ$, hence
\[
\varphi_v(\Lambda)=p\ZZ
\]
for some integer $p \geq 0$. Because $\omega$ is nondegenerate on $\RR^{2d}$ and $\Lambda$ has full rank,
$\varphi_v$ is not the zero map, so $p \ge 1$.

Assume for contradiction that $p \ge 2$. Then for every $x\in \Lambda$ we have
$\omega(v,x)\in p\ZZ$, hence
\[
\omega\!\left(\frac{1}{p}v,x\right)=\frac{1}{p}\,\omega(v,x)\in \ZZ.
\]
This shows $\frac{1}{p}v\in \{w \in \RR^{2d}: \omega(w, \Lambda) \subset \ZZ\}$. By the symplectic assumption
$\Lambda = \{w \in \RR^{2d}: \omega(w, \Lambda) \subset \ZZ\}$, so $\frac{1}{p}v \in \Lambda$, i.e. $v = pv_0$ for some $v_0\in \Lambda$,
contradicting that $v$ is primitive. Therefore $p=1$.

Thus $\varphi_v(\Lambda)=\ZZ$, so there exists $w\in \Lambda$ with $\omega(v,w)=1$.
Finally, if $w=kw_0$ with $k\in\ZZ$ and $w_0\in\Lambda$, then $1=\omega(v,w)=k\,\omega(v,w_0)$ forces
$k=\pm 1$, so $w$ is primitive.
\end{proof}

Define the \emph{symplectic dual lattice} by
\[
\Lambda^*_{\omega}=\{w\in\RR^{2d}:\ \omega(w,\Lambda)\subset \ZZ\}.
\]
We say that $\Lambda$ is \emph{$\omega$-isodual} if $\Lambda=\Lambda^*_{\omega}$ and Lemma~\ref{lem:exist_symp_dual} implies that all symplectic lattices are $\omega$-isodual.

\begin{lemma}[Existence of the Integral Lagrangian Complement]\label{lem:lagcom}
Let $\Lambda$ be a symplectic lattice in $\RR^{2d}$. Let $v_1, \dots, v_d \in \Lambda$ be lattice points spanning a Lagrangian subspace in $\RR^{2d}$ and generating a primitive (i.e. saturated) rank-$d$ sublattice $L = \langle v_1, \dots, v_d \rangle_\ZZ$ of $\Lambda$. Then there exist $w_1,\dots,w_d\in\Lambda$ such that $(v_1,\dots,v_d,w_1,\dots,w_d)$ is a symplectic $\ZZ$-basis of $\Lambda$. 
\end{lemma}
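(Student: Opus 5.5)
The plan is to use the unimodularity of $\omega$ on $\Lambda$ together with the saturation of $L$ to build a dual family. Since $\Lambda$ is symplectic, it is $\omega$-isodual (Lemma~\ref{lem:exist_symp_dual} and the remark after it). Hence the map
\[
\Phi:\Lambda \to \mathrm{Hom}(\Lambda,\ZZ),\qquad x\mapsto \omega(x,\cdot),
\]
is an isomorphism of $\ZZ$-modules. Because $L$ is saturated in $\Lambda$, the quotient $\Lambda/L$ is torsion-free, and therefore $L$ is a direct summand of $\Lambda$. Consequently the restriction map $\mathrm{Hom}(\Lambda,\ZZ)\to\mathrm{Hom}(L,\ZZ)$ is surjective. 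Composing with $\Phi$, for each $j$ we obtain $w_j'\in\Lambda$ with $\omega(v_i,w_j')=\delta_{ij}$ for all $i$. Here I take $v_1,\dots,v_d$ to be a $\ZZ$-basis of $L$; they are linearly independent since they span a $d$-dimensional Lagrangian subspace.

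The second step corrects the $w_j'$ so that they are mutually isotropic, while keeping $\omega(v_i,w_j)=\delta_{ij}$. Set $a_{jk}=\omega(w_j',w_k')\in\ZZ$; the matrix $(a_{jk})$ is antisymmetric. Put
\[
w_j = w_j' + \sum_{k<j} a_{jk}\, v_k .
\]
Since $L$ is isotropic, $\omega(v_i,w_j)=\delta_{ij}$ still holds. For $j<k$ one computes
\[
\omega(w_j,w_k)=a_{jk}+\omega\Bigl(w_j',\sum_{l<k}a_{kl}v_l\Bigr)+\omega\Bigl(\sum_{l<j}a_{jl}v_l,w_k'\Bigr).
\]
The middle term is $-a_{kj}=a_{jk}$ with a sign to be checked, and the last term vanishes because $l<j<k$ means $l\neq k$. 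So the exact correction needs the coefficient $\tfrac12$, which is not integral. To avoid this, I would use the triangular choice $w_j=w_j'-\sum_{k>j}a_{jk}v_k$ (or its mirror) and verify directly that exactly one correction term survives. That term then cancels $a_{jk}$, giving integrality without any halving. This sign and index bookkeeping is the main point to check carefully.

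Finally, the family $(v_1,\dots,v_d,w_1,\dots,w_d)$ satisfies the symplectic relations, so its Gram matrix with respect to $\omega$ is $J$, which has determinant $1$. The covolume of the sublattice $M$ it generates is therefore tied to that of $\Lambda$. Concretely, if $M\subsetneq\Lambda$ with index $n>1$, then the Gram determinant of $\omega$ on $M$ would be $n^2$ times that on $\Lambda$. The latter is $1$ by unimodularity (since $\Lambda$ has a symplectic basis), so $n^2\cdot 1=\det J=1$, forcing $n=1$. Hence the family is a symplectic $\ZZ$-basis of $\Lambda$.
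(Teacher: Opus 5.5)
Your proposal is correct and follows the paper's proof essentially step for step. Both use $\omega$-isoduality plus saturation of $L$ to produce $w_j'$ with $\omega(v_i,w_j')=\delta_{ij}$, then a triangular symplectic Gram--Schmidt correction, then unimodularity of the $\omega$-Gram matrix $J$ to conclude. Your Gram-determinant index argument spells out the spanning step that the paper only asserts from $B\trans J B = J$.

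Your upper-triangular choice $w_j=w_j'-\sum_{k>j}\omega(w_j',w_k')v_k$ checks out. Your ``$\tfrac12$'' worry about the lower-triangular version was only a sign slip: $w_j=w_j'-\sum_{k<j}a_{jk}v_k=w_j'+\sum_{k<j}\omega(w_k',w_j')v_k$ also works, and it coincides with the paper's inductive formula.
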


\begin{proof}
     First, we construct a dual family $w_1',\dots,w_d'$ with $\omega(v_i,w_j')=\delta_{ij}$.
Because $\Lambda=\Lambda_\omega^\ast$, the map
\[
\Psi:\Lambda \longrightarrow \mathrm{Hom}(\Lambda,\ZZ),\qquad x\longmapsto \big(y\mapsto \omega(x,y)\big)
\]
is an isomorphism of $\ZZ$-modules.

Let $L = \langle v_1, \dots, v_d\rangle_{\ZZ} \subset \Lambda$ be the sublattice generated by $\{v_1, \dots, v_d\}$. By the assumption that $L$ is saturated in $\Lambda$, $\Lambda/L$ is torsion-free. Applying $\mathrm{Hom}(-,\ZZ)$ to
$0\to L\to \Lambda\to \Lambda/L\to 0$ shows that the restriction map
\[
\mathrm{res} : \mathrm{Hom}(\Lambda,\ZZ) \longrightarrow \mathrm{Hom}(L,\ZZ)
\]
is surjective. Let $\ell_1, \dots, \ell_d \in \mathrm{Hom}(L,\ZZ)$ be the dual basis, i.e. $\ell_j(v_i)=\delta_{ij}$.
Choose $l_j'\in\mathrm{Hom}(\Lambda,\ZZ)$ with $l_j'|_L= \ell_j$.
By $\omega$-isoduality, pick $w_j' \in \Lambda$ such that $\Psi(w_j') = -l_j'$, i.e.
\[
\omega(w_j',x) = -l_j'(x)\qquad \forall x\in\Lambda.
\]
Then for every $i$ we have
\[
\omega(v_i,w_j') = -\omega(w_j',v_i )= l_j'(v_i) = l_j(v_i)=\delta_{ij}.
\]
Next, we modify $\{w_j'\}$ via a symplectic Gram--Schmidt procedure to obtain $\{w_j\}$ with $\omega(w_i, w_j) = 0$, and $\omega(v_i, w_j) = \delta_{ij}$.
Define $w_1:=w_1'$, and inductively for $j=2,\dots,d$ set
\[
w_j :=\; w_j' + \sum_{i=1}^{j-1} \omega(w_i,w_j') v_i.
\]
For any $k$, we have $\omega(v_k,w_j) = \omega(v_k,w_j') = \delta_{kj}$ because $\omega(v_k,v_i) = 0$
and for each $i < j$,
\[
\omega(w_i,w_j)
=\omega(w_i,w_j')+\sum_{k=1}^{j-1}\omega(w_k,w_j') \omega(w_i,v_k)
=\omega(w_i,w_j')-\omega(w_i,w_j')=0,
\]
since $\omega(w_i,v_k)=-\omega(v_k,w_i)=-\delta_{ki}$.
Finally, we show that the resulting $2d$ vectors form a $\ZZ$-basis of $\Lambda$. Let $B$ be the $2d\times 2d$ integer matrix whose columns are $(v_1,\dots,v_d,w_1,\dots,w_d)$.
The relations proved above say exactly that 
\[
B\trans J B = J.
\]
Therefore, $\{v_1, \dots, v_d, w_1, \dots, w_d\}$ is a symplectic basis of $\Lambda$.
\end{proof}

\begin{remark}\label{rmk:finding_short_frame}
We identify $\Gamma g \in \mathcal{X}$ with the symplectic lattice 
\[
\Lambda_g := \ZZ^{2d} g,
\]
i.e., the lattice generated by the rows of $g$. Denote $u_X = \begin{pmatrix}
    I_d & X \\ 0 & I_d
\end{pmatrix}$ and $g_t = \diag(e^t I_d, e^{-t} I_d)$. Then
\[
(Q, P)u_X g_t^{-1} = (Q, QX + P)g_t^{-1} = (e^{-t}Q, e^t(QX + P)).
\]
Recall the proof of Theorem~\ref{thm:diri}. From the perspective of symplectic lattices, Theorem~\ref{thm:diri} essentially states that the lattice $\ZZ^{2d} u_Xg_t^{-1}$ contains a lattice frame spanning a Lagrangian subspace with uniformly bounded length. 
\end{remark}

The following Lemma is the key observation on symplectic lattices. If $\varepsilon$ is sufficiently small, then the symplectic lattice has a unique Lagrangian subspace which contains small integral Lagrangian frames.

\begin{lemma}[Unique Short Lagrangian Subspace]\label{lem:unique_small_lag}
    Let $\Lambda = \ZZ^{2d}g$ be a symplectic lattice in $\RR^{2d}$ with $g \in \Sp_{2d}(\RR)$. Let $x \in \mathcal{V}^2_{\mathrm{prim}}g$ be a primitive Lagrangian frame and denote by $L_x$ the Lagrangian subspace spanned by the row vectors of $x$. Assume that $\infnorm{x} < \varepsilon$. Then any Lagrangian frame $y \in \mathcal{V}^2_{\ZZ} g$ with $L_y \neq L_x$ satisfies
    \[
    \frac{1}{2d\varepsilon} < \infnorm{y}.
    \]
\end{lemma}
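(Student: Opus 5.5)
The plan is to exhibit a row vector of $y$ that pairs nontrivially with a row of $x$ under $\omega$, and then to use integrality to force this pairing to be at least $1$.

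Write the rows of $x$ as $v_1,\dots,v_d$ and the rows of $y$ as $u_1,\dots,u_d$; all of these lie in $\Lambda$. Since $L_x$ is Lagrangian, it is maximal isotropic, so $L_x^{\perp_\omega}=L_x$. The subspace $L_y$ is $d$-dimensional and $L_y\neq L_x$, so $L_y\not\subset L_x$. Hence some row $u_j$ satisfies $u_j\notin L_x=L_x^{\perp_\omega}$, which means there is an index $i$ with $\omega(v_i,u_j)\neq 0$.

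Because $\Lambda=\ZZ^{2d}g$ with $g\in\Sp_{2d}(\RR)$, we have $\omega(\Lambda,\Lambda)\subset\ZZ$. Indeed, $\omega(ag,bg)=\omega(a,b)\in\ZZ$ for all $a,b\in\ZZ^{2d}$. Therefore $|\omega(v_i,u_j)|\geq 1$. Notice that primitivity of $x$ is not needed for this step; it only serves to fix the setup in which $L_x$ is the distinguished short subspace.

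Next, bound $\omega$ in the sup norm. For $v=(a,b)$ and $u=(c,e)$ in $\RR^d\times\RR^d$,
\[
\omega(v,u)=\sum_{k=1}^d (a_k e_k - b_k c_k),
\]
so $|\omega(v,u)|\leq 2d\,\|v\|_\infty\|u\|_\infty$. Here I take $\infnorm{x}$ to mean the maximum absolute value of an entry, which dominates the sup norm of every row. Combining the two bounds gives
\[
1\leq |\omega(v_i,u_j)|\leq 2d\,\infnorm{x}\,\infnorm{y}<2d\varepsilon\,\infnorm{y},
\]
and hence $\infnorm{y}>1/(2d\varepsilon)$.

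The only step that needs care is producing the pair $(v_i,u_j)$ with nonzero pairing, which is where Lagrangian maximality ($L^{\perp_\omega}=L$) enters. Everything else is routine integrality and a norm estimate. The constant $2d$ matches the count of $2d$ terms in the expansion of $\omega$.
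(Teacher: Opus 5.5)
Your proposal is correct and follows the paper's proof: take a row of $y$ outside $L_x = L_x^{\perp_\omega}$, pair it with a row of $x$ so that integrality forces $|\omega| \geq 1$, and bound the pairing by $2d\|\cdot\|_\infty\|\cdot\|_\infty$. The only cosmetic difference is that you obtain the constant $2d$ by expanding $\omega$ term by term, whereas the paper uses Cauchy--Schwarz together with $\|\cdot\|_2 \leq \sqrt{2d}\,\|\cdot\|_\infty$.
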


\begin{proof}
Let $\{v_1, \dots, v_d\}$ be the row vectors of $x$. Since $L_x$ is Lagrangian, it equals its own symplectic orthogonal complement: $L_x = L_x^{\perp_\omega}$. Hence any $u \in \RR^{2d}$ with $u \notin L_x$ satisfies $u \notin L_x^{\perp_\omega}$, so $\omega(u, v) \neq 0$ for some $v \in L_x$; expanding $v = \sum c_i v_i$ in the basis $\{v_i\}$ of $L_x$, there exists at least one $k \in \{1, \dots, d\}$ with $\omega(u, v_k) \neq 0$.

Now let $y \in \mathcal{V}^2_{\ZZ} g$ with $L_y \neq L_x$, and choose a row $u$ of $y$ with $u \notin L_x$ (such a row exists since the rows of $y$ span $L_y$). Pick $v_k$ as above. Since $u, v_k \in \Lambda$ and $\omega$ takes integer values on $\Lambda \times \Lambda$, the value $\omega(u, v_k)$ is a non-zero integer, so $|\omega(u, v_k)| \geq 1$. On the other hand, the Cauchy--Schwarz inequality for $\omega(\cdot, \cdot) = (\cdot)\trans J (\cdot)$ with $\|Jv\|_2 = \|v\|_2$ yields
\[
1 \leq |\omega(u, v_k)| \leq \|u\|_2\,\|v_k\|_2 \leq 2d\infnorm{u} \infnorm{v_k} < 2d \varepsilon \infnorm{y},
\]
using $\|\cdot\|_2 \leq \sqrt{2d}\,\|\cdot\|_\infty$ on $\RR^{2d}$ and $\infnorm{v_k} \leq \infnorm{x} < \varepsilon$. Rearranging gives $\infnorm{y} > 1/(2d\varepsilon)$, as claimed.
\end{proof}

Define $S_{\varepsilon}:=\{v \in \mathcal{V}^2: \infnorm{v} < \varepsilon\},$
and let $\mathds{1}_{S_{\varepsilon}}$ denote its indicator function.
The following corollary is a reformulation of Lemma~\ref{lem:unique_small_lag}.

\begin{corollary}\label{cor:lag}
Fix $g \in G$. For any $\varepsilon < 1/\sqrt{2d}$, if $\mathds{1}_{S_{\varepsilon}}(xg)\,\mathds{1}_{S_{\varepsilon}}(yg)=1$ for some $x,y\in \mathcal{V}^2_{\mathrm{prim}}$(Definition \ref{def:lagframe}), then there exists $\delta\in\GL_d(\ZZ)$ such that $y=\delta x$.
\end{corollary}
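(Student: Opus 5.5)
We deduce the statement directly from Lemma~\ref{lem:unique_small_lag}, followed by a short lattice-theoretic argument. Suppose $x, y \in \mathcal{V}^2_{\mathrm{prim}}$ with $\infnorm{xg} < \varepsilon$ and $\infnorm{yg} < \varepsilon$, where $\varepsilon < 1/\sqrt{2d}$. Both $xg$ and $yg$ are primitive Lagrangian frames of the symplectic lattice $\Lambda = \ZZ^{2d}g$, because right multiplication by $g \in \Sp_{2d}(\RR)$ preserves the relations $QP\trans = PQ\trans$ and $\det(QQ\trans+PP\trans)\neq 0$. It also carries saturated sublattices of $\ZZ^{2d}$ to saturated sublattices of $\Lambda$.

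\textbf{Step 1: the two frames span the same Lagrangian.} Suppose, for contradiction, that $L_{xg} \neq L_{yg}$. Lemma~\ref{lem:unique_small_lag}, applied with $xg$ as the short primitive frame, gives
\[
\infnorm{yg} > \frac{1}{2d\varepsilon}.
\]
Combined with $\infnorm{yg} < \varepsilon$, this forces $2d\varepsilon^2 > 1$, that is, $\varepsilon > 1/\sqrt{2d}$. This contradicts the hypothesis, so $L_{xg} = L_{yg}$. Since $g$ is invertible, it follows that $L_x = L_y$ as subspaces of $\RR^{2d}$.

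\textbf{Step 2: from equal subspaces to a $\GL_d(\ZZ)$ change of basis.} Let $L = L_x = L_y$. The rows of $x$ generate a saturated rank-$d$ sublattice of $\ZZ^{2d}$ lying in $L$. Saturation means this sublattice equals $L \cap \ZZ^{2d}$: any integer vector in $L$ lies in the $\RR$-span of the rows, and its image in the torsion-free quotient $\ZZ^{2d}/\langle\text{rows}\rangle$ is torsion, because some integer multiple of it lies in the rational span of the rows. The same reasoning shows that the rows of $y$ also generate $L\cap\ZZ^{2d}$. Hence the rows of $x$ and the rows of $y$ are two $\ZZ$-bases of the same free module of rank $d$. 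So there exists $\delta \in \mathrm{Mat}_d(\ZZ)$ with $y = \delta x$, and also $\delta' \in \mathrm{Mat}_d(\ZZ)$ with $x = \delta' y$. Because $x$ has rank $d$, we get $\delta'\delta = I_d$, and therefore $\delta \in \GL_d(\ZZ)$.

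\textbf{Main obstacle.} There is no serious obstacle. The only point needing care is the numerology in Step 1: we need $\varepsilon \le 1/(2d\varepsilon)$, and the strict inequalities $\infnorm{\cdot}<\varepsilon$ make the threshold $\varepsilon<1/\sqrt{2d}$ suffice. The saturation argument in Step 2 is standard.
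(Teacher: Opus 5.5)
Your proof is correct and follows the paper's argument: Lemma~\ref{lem:unique_small_lag} together with $\varepsilon<1/\sqrt{2d}$ forces $L_x=L_y$, and primitivity then makes the rows of $x$ and $y$ two $\ZZ$-bases of the same saturated lattice $L\cap\ZZ^{2d}$, which gives $\delta\in\GL_d(\ZZ)$. Your Step 2 spells out the saturation argument that the paper leaves implicit, with one wording fix: an integer vector of $L$ lies in the \emph{rational} span of the rows, so some integer multiple of it lies in their \emph{integer} span.
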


\begin{proof}
By Lemma~\ref{lem:unique_small_lag}, once a symplectic lattice admits a $\varepsilon$-small primitive Lagrangian frame, all $\varepsilon$-short integral Lagrangian frames lie in the same Lagrangian subspace. Hence $xg$ and $yg$ are integral frame in the sublattice $\ZZ^{2d}g 
\cap L_x = \ZZ^{2d}g \cap L_y$. Since both $xg$ and $yg$ are primitive, there exists a basis change map $\delta \in \GL_d(\ZZ)$ satisfying $y=\delta x$.
\end{proof}
This is analogous to the two-dimensional unimodular case, where a very short vector forces any vector linearly independent to $v$ in the lattice to be comparably large. Likewise, a $2d$-dimensional symplectic lattice that contains a very short integral Lagrangian frame forces any Lagrangian frame spanning a different Lagrangian subspace to be large. Corollary \ref{cor:lag} will be used to estimate the second moment of the Siegel transform for Lagrangian frames.

\subsection{The Space of Lagrangian Frames}

Consider the subgroups of the symplectic group below:
\begin{align*}
    K &= G \cap O(2d)\\ 
    A &= \left\{\mathrm{diag}(e^{-t_1}, e^{-t_2}, \dots, e^{-t_d}, e^{t_1}, \dots, e^{t_d}): \mathbf{t} = (t_1, t_2, \dots, t_d) \in \RR^d \right\} \\
    U &= \left\{\begin{pmatrix}
    (u\trans)^{-1} & 0 \\
    0 & u
\end{pmatrix} \in G: u \text{ is lower unipotent in } GL_d(\RR)\right\} \\
N &= \left\{u_X = \begin{pmatrix}
    I_d & X \\
    0 & I_d
\end{pmatrix}: X \in \Sym_d(\RR)\right\}.
\end{align*}
Using the Iwasawa decomposition $G = NUAK$, the Haar measure $\dd g$ is decomposed up to scaling as 
\[
\dd g \asymp e^{2\rho(\mathbf{t})}\dd X \dd u \dd \mathbf{t} \dd k
\]
with the modular factor 
\begin{equation}\label{eq:modular_factor}
    \rho(\mathbf{t}) = \sum_{i = 1}^{d}(d + 1 - i)t_i.
\end{equation}
See, \cite{Kna96} \S 8. Note that $\dd u$, $\dd \mathbf{t}$ and $\dd X$ are Lebesgue measures, and $\dd k$ is a finite measure.

The following proposition establishes the homogeneous structure of the space of Lagrangian frames and its Haar measure $\dd m_{\mathcal{V}^2}$.

\begin{proposition}[Homogeneous structure and invariant measure of $\mathcal{V}^2$]
The space of Lagrangian frames $\mathcal{V}^2$ admits a transitive right action of $G$. The stabilizer of the standard frame $F = (0\ \ I_d)$ under this action is the subgroup $N$, yielding a $G$-equivariant identification $\mathcal{V}^2 \simeq N \backslash G$. Consequently, there exists a unique (up to scaling) $G$-invariant Radon measure $m_{\mathcal{V}^2}$ on $\mathcal{V}^2$. In terms of the Iwasawa coordinates $g = u a_{\mathbf{t}} k$, this measure can be expressed as \[
\dd m_{\mathcal{V}^2}(Ng) \asymp e^{2\rho(\mathbf{t})}\dd u \dd \mathbf{t} \dd k.
\]
\end{proposition}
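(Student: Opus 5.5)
We will prove the proposition in three steps: transitivity, identification of the stabilizer, and transport of Haar measure through the Iwasawa decomposition.

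\textbf{Step 1: the action is well defined and transitive.} For $(Q,P)\in\mathcal{V}^2$ and $g\in G$, set $(Q,P)\cdot g=(Q,P)g$. The two defining conditions of $\mathcal{V}^2$ can be restated as follows. The relation $QP\trans=PQ\trans$ is equivalent to
\[
(Q,P)\,J\,(Q,P)\trans=0,
\]
that is, the rows of $(Q,P)$ span an isotropic subspace. The condition $\det(QQ\trans+PP\trans)\neq0$ says that $(Q,P)$ has rank $d$. Since $gJg\trans=J$ for $g\in G$ (because $G$ is closed under transposition), both conditions are preserved by right multiplication. Hence the rows of any $x\in\mathcal{V}^2$ form a basis $v_1,\dots,v_d$ of a Lagrangian subspace.

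To show transitivity, we extend this basis to a symplectic basis $v_1,\dots,v_d,w_1,\dots,w_d$ of $\RR^{2d}$. We use the real analogue of the construction in Lemma~\ref{lem:lagcom}. Nondegeneracy of $\omega$ gives vectors $w_j'$ with $\omega(v_i,w_j')=\delta_{ij}$. The symplectic Gram--Schmidt step from that proof then produces the $w_j$, and no integrality issue arises over $\RR$. Next we order the basis so that the $v$'s occupy the last $d$ rows, with sign adjustments so that it matches $F=(0\ \ I_d)$ as the bottom block of a symplectic matrix. This ordering gives $h\in G$ with $F h=x$. Thus $\mathcal{V}^2=F\cdot G$.

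\textbf{Step 2: the stabilizer.} Write $g=\begin{pmatrix}A&B\\C&D\end{pmatrix}$, so that $Fg=(C,D)$. Then $Fg=F$ if and only if $C=0$ and $D=I_d$. The symplectic relations $A\trans D - C\trans B=I$ and $A\trans C=C\trans A$, together with $B\trans D=D\trans B$, then force $A=I_d$ and $B=B\trans$. Hence the stabilizer is exactly $N$, and the orbit map $Ng\mapsto Fg$ is a continuous $G$-equivariant bijection $N\backslash G\to\mathcal{V}^2$.

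To upgrade this bijection to a homeomorphism, we observe that $\mathcal{V}^2$ is an open subset of the real algebraic variety $\{QP\trans=PQ\trans\}$. Its smooth locus therefore has dimension $2d^2-\binom{d}{2}$, which equals $\dim G-\dim N=(2d^2+d)-\tfrac{d(d+1)}{2}$. The orbit map is then a submersion by the constant-rank theorem; alternatively, one can invoke the standard fact that orbits of algebraic actions on locally closed sets are homeomorphic to the quotient.

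\textbf{Step 3: the invariant measure.} Both $G$ and $N$ are unimodular: $G$ is semisimple and $N$ is abelian. The general criterion for the existence of invariant measures on homogeneous spaces therefore gives a unique (up to scaling) $G$-invariant Radon measure on $N\backslash G$. Concretely, this measure is obtained from Haar measure by integrating out the fibers:
\[
\int_G f\,\dd g=\int_{N\backslash G}\int_N f(ng)\,\dd n\,\dd m_{\mathcal{V}^2}(Ng).
\]
Since $N\cap UAK=\{e\}$ by uniqueness of the Iwasawa decomposition, the map $UAK\to N\backslash G$ given by $uak\mapsto Nuak$ is a bijection. Applying the displayed disintegration to the formula $\dd g\asymp e^{2\rho(\mathbf{t})}\dd X\dd u\dd\mathbf{t}\dd k$, where $\dd X$ is Haar measure on $N$, yields $\dd m_{\mathcal{V}^2}\asymp e^{2\rho(\mathbf{t})}\dd u\dd\mathbf{t}\dd k$.

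\textbf{Main obstacle.} The delicate point is the ordering $G=NUAK$. The quotient is taken on the left by $N$, so $N$ must be the leftmost factor for this transfer to work. One must also verify that the density $e^{2\rho}$ quoted from \cite{Kna96} is the one for this ordering, rather than for $KAN$. If the orderings differ, I would recompute the density via the Jacobian of the conjugation $n\mapsto (ua)^{-1}n(ua)$. That conjugation scales $\dd X$ by $\prod_{i\le j}e^{(t_i+t_j)}$ under $a_{\mathbf t}$, which combines with the $U$-part to produce the stated modular factor $\rho(\mathbf t)=\sum_i(d+1-i)t_i$.
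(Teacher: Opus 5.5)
Your proposal is correct and follows the same route as the paper. Transitivity comes from the real version of the symplectic Gram--Schmidt/Lagrangian-complement construction of Lemma~\ref{lem:lagcom}, the stabilizer of $F$ is $N$, and the density is read off from the Iwasawa formula $\dd g\asymp e^{2\rho(\mathbf{t})}\dd X\,\dd u\,\dd\mathbf{t}\,\dd k$ with $N$ leftmost. You also supply details the paper leaves implicit: the explicit stabilizer computation, the homeomorphism $N\backslash G\cong\mathcal{V}^2$, unimodularity, the quotient integral formula, and a correct Jacobian check of $e^{2\rho(\mathbf{t})}$ for the ordering $NUAK$. The only imprecision is that injectivity of $UAK\to N\backslash G$ needs uniqueness of the full factorization $g=nuak$, not merely $N\cap UAK=\{e\}$.
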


\begin{proof}
    In the proof of Lemma~\ref{lem:lagcom}, the symplectic Gram--Schmidt procedure always guarantees the Lagrangian complement, i.e. for any Lagrangian frame $x \in \mathcal{V}^2$ with its row vectors $w_1, \dots, w_d$, there exist $v_1, \dots, v_d$ such that $v_1, \dots, v_d, w_1, \dots w_d$ form a symplectic basis of $\RR^{2d}$. Let $g$ be a $2d \times 2d$ matrix whose rows are $v_1, \dots, v_d, w_1, \dots, w_d$. Then we have 
    \[
    g\trans J g = J, \qquad Fg = \begin{pmatrix}
        w_1 \\ w_2 \\ \vdots\\
        w_d
    \end{pmatrix}.
    \]
    Therefore, $G$ acts transitively and $\mathrm{Stab}_G(F) = N$. 
\end{proof}

\begin{remark}
    For $d = 1$, $G = \SL_2(\RR)$ and $N \backslash G \simeq \RR^2 \setminus \{0\}$. Since the $\SL_2$-invariant measure on $\RR^2$ is Lebesgue measure, the Haar measure on $N \backslash G$ corresponds to Lebesgue measure. However, when $d > 1$, the $G$-invariant measure on $\mathcal{V}^2$ does not coincide with a naive ambient Lebesgue measure on the submanifold $\mathcal{V}^2 \subset \mathrm{Mat}_{d \times 2d}(\RR)$. 
\end{remark}

Building upon the $G$-invariant measure on $\mathcal{V}^2$, we define the Siegel transform as a systematic way to study the occurrence of short frames in symplectic lattices. See for \cite{Sie45}. 

\begin{definition}[Siegel Transform for Lagrangian Frames]\label{def:Siegel_transform}
Let $\mathcal{V}^2_{\mathrm{prim}} \subset \mathcal{V}^2 \cap \mathrm{Mat}_{d \times 2d}(\ZZ)$ in Definition \ref{def:lagframe}. For any compactly supported measurable function $f : \mathcal{V}^2 \rightarrow \RR$, we define the \emph{Siegel transform} of $f$ by 
\[
\widehat{f}(\Gamma g) = \sum_{v \in \mathcal{V}^2_{ \mathrm{prim}}} f(v g).
\]
\end{definition}

The following lemma shows that the set of all primitive Lagrangian frames forms a single $\Gamma$-orbit. 

\begin{lemma}\label{lem:prim_lag}
    Let $\mathcal{V}^2_{\mathrm{prim}}$ be the set of primitive Lagrangian frames and $F = (0, I_d)$ be the standard frame in it. Then 
    \[
    \mathcal{V}^2_{\mathrm{prim}} = F \cdot \Gamma , \qquad \mathrm{Stab}_{\Gamma}(F) = \Gamma \cap N.
    \]
\end{lemma}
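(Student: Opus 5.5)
We prove the two inclusions $F\cdot\Gamma \subseteq \mathcal{V}^2_{\mathrm{prim}}$ and $\mathcal{V}^2_{\mathrm{prim}} \subseteq F\cdot\Gamma$, and then compute the stabilizer.

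\emph{The inclusion $F\cdot\Gamma\subseteq \mathcal{V}^2_{\mathrm{prim}}$.} For $\gamma=\begin{pmatrix} A & B\\ C & D\end{pmatrix}\in\Gamma$ we have $F\gamma=(C,D)$. This pair is integral. The symplectic relation $\gamma J\gamma\trans = J$ (equivalent to $\gamma\trans J\gamma=J$ since $\gamma^{-1}=-J\gamma\trans J$) yields $CD\trans=DC\trans$. Since $\gamma$ is invertible, the rows of $(C,D)$ are linearly independent, so $CC\trans+DD\trans$ is positive definite, hence invertible. Primitivity follows because the rows of $(C,D)$ are part of a $\ZZ$-basis of $\ZZ^{2d}$, namely the rows of $\gamma\in\GL_{2d}(\ZZ)$. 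A sublattice spanned by part of a basis is saturated.

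\emph{The inclusion $\mathcal{V}^2_{\mathrm{prim}}\subseteq F\cdot\Gamma$.} Take $(Q,P)\in\mathcal{V}^2_{\mathrm{prim}}$ and let $w_1,\dots,w_d$ be its rows. The condition $QP\trans=PQ\trans$ says exactly that $\omega(w_i,w_j)=0$, so they span a Lagrangian subspace. The invertibility of $QQ\trans+PP\trans$ gives linear independence, and primitivity gives that $\langle w_1,\dots,w_d\rangle_\ZZ$ is saturated in $\ZZ^{2d}$. Since $\ZZ^{2d}$ is the symplectic lattice with the standard symplectic basis, Lemma~\ref{lem:lagcom} applies. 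It produces $v_1,\dots,v_d\in\ZZ^{2d}$ such that $(w_1,\dots,w_d,v_1,\dots,v_d)$ is a symplectic $\ZZ$-basis.

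To match the convention that $F$ picks out the last $d$ rows, we reorder this basis as $(v_1,\dots,v_d,w_1,\dots,w_d)$. Replacing $v_i$ by $-v_i$ fixes the sign, so that $\omega(v_i,w_j)=\delta_{ij}$ and $\omega(v_i,v_j)=\omega(w_i,w_j)=0$. Let $\gamma$ be the matrix with rows $v_1,\dots,v_d,w_1,\dots,w_d$. Then $\gamma J\gamma\trans=J$ (with the sign convention for $J$ matching the Gram matrix of $\omega$), so $\gamma\in\Sp_{2d}(\RR)$. It is integral and unimodular because its rows form a $\ZZ$-basis of $\ZZ^{2d}$, so $\gamma\in\Gamma$ and $F\gamma=(Q,P)$.

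\emph{The stabilizer.} By the proposition on the homogeneous structure of $\mathcal{V}^2$, $\mathrm{Stab}_G(F)=N$. Intersecting with $\Gamma$ gives $\mathrm{Stab}_\Gamma(F)=\Gamma\cap N$. Directly: $F\gamma=F$ forces $C=0$ and $D=I$. The symplectic conditions then give $A=I$ and $B=B\trans$, so $\gamma=u_B$ with $B\in\Sym_d(\ZZ)$.

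\emph{Main obstacle.} The only delicate point is bookkeeping of conventions. Row versus column symplectic conditions must be handled consistently: Lemma~\ref{lem:lagcom} is phrased with columns and $B\trans JB=J$, while here we need rows and $\gamma J\gamma\trans=J$. The sign ordering of the complement $v_i$ relative to $w_j$ must also be chosen so that the $w$'s land in the bottom block. We resolve both by the sign change and transpose argument above, using that $\Sp_{2d}$ is closed under transpose.
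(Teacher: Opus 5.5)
Your proof is correct and follows the same route as the paper: complete the primitive Lagrangian frame to a symplectic $\ZZ$-basis of $\ZZ^{2d}$ via Lemma~\ref{lem:lagcom}, assemble $\gamma\in\Gamma$ from that basis, and compute the stabilizer directly. You are more careful than the paper in three places. The paper proves only $\mathcal{V}^2_{\mathrm{prim}}\subseteq F\cdot\Gamma$, whereas you also check the reverse inclusion. The paper puts the frame in the top rows of $\gamma$ even though $F=(0,I_d)$ selects the bottom rows, and your reordering with the sign change on the $v_i$ corrects this. Finally, you carry out the stabilizer computation explicitly instead of leaving it as ``direct computation''.
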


\begin{proof}
    By Lemma~\ref{lem:exist_symp_dual}, any primitive vector $v \in \ZZ^{2d}$ has a symplectic dual $w \in \ZZ^{2d}$ with $\omega(v, w) = 1$. Furthermore, by Lemma~\ref{lem:lagcom}, any element $(v_1, \dots, v_d) \in \mathcal{V}^2_{\mathrm{prim}}$ admits a Lagrangian complement $\{w_1, \dots, w_d\} \subset \ZZ^{2d}$ with $\omega(v_i, w_j) = \delta_{ij}$. Hence there exists $\gamma \in \Gamma$ whose rows are $v_1, \dots, v_d, w_1, \dots, w_d$. Consequently, the $\Gamma$-action on $\mathcal{V}^2_{\mathrm{prim}}$ is transitive, and $\mathrm{Stab}_{\Gamma}(F) = \Gamma \cap N$ by direct computation.
\end{proof}

With the algebraic structure of $\mathcal{V}^2_{\mathrm{prim}}$ clarified, we are now ready to state and prove the Siegel mean-value theorem for Lagrangian frames. The fact that $\mathcal{V}^2_{\mathrm{prim}}$ is a single $\Gamma$-orbit allows us to express the Siegel transform originally defined as a sum over primitive frames as a sum over the coset space $\Gamma \cap N \backslash \Gamma$.

\begin{proposition}[Siegel mean-value formula for $\mathcal{V}^2$]\label{prop:siegel_mean_value_formula}
    Let $\widehat{f}$ be the Siegel transform of $f $ which is compactly supported measurable in $\mathcal{V}^2$. Then there exists a $G$-invariant measure $m_{\mathcal{V}^2}$ such that 
    \[
    \int_{\mathcal{X}}\widehat{f}(\Lambda)\dd m_{\mathcal{X}} = \int_{\mathcal{V}^2}f(x)\dd m_{\mathcal{V}^2}.
    \]
\end{proposition}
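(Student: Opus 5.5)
The plan is the standard unfolding argument. By Lemma~\ref{lem:prim_lag}, $\mathcal{V}^2_{\mathrm{prim}} = F\cdot\Gamma$ with $\mathrm{Stab}_\Gamma(F) = \Gamma\cap N$. The map $(\Gamma\cap N)\gamma \mapsto F\gamma$ is therefore a bijection $(\Gamma\cap N)\backslash\Gamma \to \mathcal{V}^2_{\mathrm{prim}}$, and we can rewrite
\[
\widehat{f}(\Gamma g) = \sum_{\gamma \in (\Gamma\cap N)\backslash \Gamma} f(F\gamma g) = \sum_{\gamma \in (\Gamma\cap N)\backslash \Gamma} \tilde f(\gamma g),
\]
where $\tilde f := f\circ \pi$ and $\pi: G\to \mathcal{V}^2 \simeq N\backslash G$ is the orbit map $g\mapsto Fg$. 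This map is well defined on cosets since $N=\mathrm{Stab}_G(F)$, by the proposition on the homogeneous structure of $\mathcal{V}^2$.

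First assume $f\ge 0$; the general case follows by splitting $f=f^+-f^-$ once finiteness is known. By Tonelli, integrating over a fundamental domain for $\Gamma$ in $G$ and unfolding gives
\[
\int_{\Gamma\backslash G}\widehat f\,\dd m_{\mathcal{X}} = \int_{(\Gamma\cap N)\backslash G} \tilde f(g)\,\dd g.
\]
Here $\dd g$ is the Haar measure normalized so that $\Gamma\backslash G$ has mass one; $G$ is semisimple, hence unimodular, so this measure descends to both quotients.

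Next, fibre $(\Gamma\cap N)\backslash G$ over $N\backslash G$ with fibre $(\Gamma\cap N)\backslash N$. Since $N\simeq \Sym_d(\RR)$ is abelian, hence unimodular, and $G$ is unimodular, the quotient $N\backslash G$ carries a $G$-invariant measure $m$ satisfying Weil's formula
\[
\int_{(\Gamma\cap N)\backslash G}\tilde f\,\dd g = \int_{N\backslash G}\left(\int_{(\Gamma\cap N)\backslash N}\tilde f(ng)\,\dd n\right)\dd m(Ng).
\]
The inner integrand is constant in $n$ because $\tilde f$ is left $N$-invariant. Moreover, $\Gamma\cap N = \{u_X : X\in \Sym_d(\ZZ)\}$ is a lattice in $N$, with covolume $\mathrm{vol}(\Sym_d(\RR)/\Sym_d(\ZZ)) = 1$ for the Lebesgue measure $\dd X$. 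Setting $m_{\mathcal{V}^2}$ to be $m$ scaled by this finite covolume, which is the measure $e^{2\rho(\mathbf{t})}\dd u\dd\mathbf{t}\dd k$ of the preceding proposition up to a constant, gives the claimed identity.

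The main obstacle is justifying the unfolding step rigorously. This needs (i) the bijection of Lemma~\ref{lem:prim_lag}, in particular that distinct cosets give distinct frames, which is exactly the stabilizer computation; and (ii) care with measurability and finiteness. Compact support of $f$ in $\mathcal{V}^2$ makes $\int \tilde f$ over $(\Gamma\cap N)\backslash G$ finite: the support lifts to a set of the form $N\cdot C$ with $C\subset G$ compact, which has finite measure modulo $\Gamma\cap N$ since $(\Gamma\cap N)\backslash N$ is compact. Finiteness of the left side then follows, so $\widehat f\in L^1(\mathcal{X})$, and the signed case is immediate.
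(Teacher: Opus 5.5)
Your proposal is correct and follows the same unfolding argument as the paper. You rewrite $\widehat f$ as a sum over $(\Gamma\cap N)\backslash\Gamma$ via Lemma~\ref{lem:prim_lag}, unfold to $(\Gamma\cap N)\backslash G$, and fibre over $N\backslash G$ with compact fibre $(\Gamma\cap N)\backslash N$, adding the Weil-formula and covolume details that the paper leaves implicit. One small caveat: your finiteness claim tacitly assumes $f$ is bounded, since compact support alone does not give integrability. For $f\ge 0$, Tonelli gives the identity in $[0,\infty]$ regardless, and that is all the paper later needs for indicator functions.
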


\begin{proof}
    Applying the unfolding argument to the function $\Phi(g) = \sum_{\gamma \in \Gamma \cap N \backslash \Gamma} f(F \gamma g)$, which is left $\Gamma$-invariant and integrable since $f$ is compactly supported, we obtain
    \[
    \int_{\mathcal{X}}\widehat{f}(\Lambda)\dd m_{\mathcal{X}} = \int_{\mathcal{X}}\Phi(g)\dd m_{\mathcal{X}}(\Gamma g) = \int_{\Gamma \cap N \backslash G}f(Fg)\dd m_{\Gamma \cap N \backslash G} = \mathrm{vol}(\Gamma \cap N \backslash N)\int_{N \backslash G}f(Fg)\dd m_{N \backslash G}
    \]
    with suitably normalized invariant measures $\dd m_{\Gamma \cap N \backslash G}$ and $\dd m_{N \backslash G}$. Since $\Gamma \cap N \backslash N$ has finite Haar measure and $\mathcal{V}^2$ can be identified by orbit-stabilizer relation via the map $Ng \mapsto Fg$, we have 
    \[
    \int_{\mathcal{X}}\widehat{f}(\Lambda)\dd m_{\mathcal{X}} = \int_{\mathcal{V}^2}f(x)\dd m_{\mathcal{V}^2}
    \]
    with a suitable choice of the measure $m_{\mathcal{V}^2}$ which makes the equality hold. 
\end{proof}

\section{Dynamical Symplectic Khintchine's theorem}\label{sec:dynamical_khintchine}
In this section, we complete the proof of our second main result, the Khintchine's theorem for the symmetric matrix algebra (Theorem~\ref{thm:sk}). Using the homogeneous structure of the space of Lagrangian frames and the invariant measures established in the previous section, we reformulate the Diophantine approximation problem into a dynamical shrinking target problem on the space of symplectic lattices $\mathcal{X} = \Gamma \backslash G$. 

The structure of this section is as follows. In Section \ref{subsec:dani}, we establish the Dani correspondence, which translates the $\psi$-approximability of a symmetric matrix $X$ into a shrinking target problem. In Section \ref{subsec:upper_bound}, we obtain the upper bound for the cusp volume scaling using the Siegel mean-value theorem to apply the dynamical Borel--Cantelli lemma. In Section \ref{subsec:lower_bound}, we establish the matching lower bound by controlling the second moment of the Siegel transform and confirms that our $\Delta$ function is indeed $\varrho$-distance-like. Finally, we synthesize these volume estimates with the Kleinbock--Margulis machinery to complete the proof of Theorem~\ref{thm:sk}.

\subsection{The Dani Correspondence}\label{subsec:dani}

We relate the Diophantine properties of $X$ to the depth of cusp excursions of the lattice $\Lambda_X$ in the homogeneous space $\mathcal{X}$.

\begin{definition}[The $\Delta$ function]\label{def:delta}
For $\Gamma g \in \mathcal{X}$, we define the $\Delta(\Gamma g)$ function in terms of the supremum norm of the shortest Lagrangian frame in the lattice:
\[
\Delta(\Gamma g) := \sup \left\{ \log \frac{1}{\infnorm{v}} : v \in \mathcal{V}^2_{\mathrm{prim}} g \right\}.
\]
For any $R > 0$, we define the corresponding cusp neighborhood as $A_R := \{\Gamma g \in \mathcal{X} : \Delta(\Gamma g) \ge R \}$.
\end{definition}

To translate the approximation rate into a dynamical setting, we construct a time-dependent target depth $r(t)$ based on the approximation function $\psi$.

\begin{proposition}[Dani Correspondence]\label{prop:dani}
Let $g_t = \diag(e^t I_d, e^{-t} I_d)$ be a one-parameter diagonal subgroup in $G$. Let $\psi$ be a monotonically decreasing function. Define $r(t)$ implicitly by the relation:
\begin{equation}\label{eq:rt_def}
\psi(e^{t-r(t)}) = e^{-t-r(t)}.
\end{equation}
Then, $X \in W(\psi)$ if and only if
\[
\Delta(\Gamma u_X g_t^{-1}) \ge r(t) \quad \text{for an unbounded sequence of } t \to \infty.
\]
\end{proposition}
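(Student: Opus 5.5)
The plan is to run the standard Dani argument of Kleinbock--Margulis, with the computation in Remark~\ref{rmk:finding_short_frame} as the basic identity. For $(Q,P)\in\mathcal{V}^2_{\mathrm{prim}}$ we have $(Q,P)u_Xg_t^{-1}=(e^{-t}Q,\,e^{t}(QX+P))$. Since $u_Xg_t^{-1}\in G$, the set $\mathcal{V}^2_{\mathrm{prim}}u_Xg_t^{-1}$ is exactly the set of primitive Lagrangian frames of the lattice $\Lambda_t:=\ZZ^{2d}u_Xg_t^{-1}$. Hence $\Delta(\Gamma u_Xg_t^{-1})\ge r(t)$ holds, up to the sup/attainment issue treated below, if and only if some primitive frame satisfies
\[
\infnorm{Q}\le e^{t-r(t)},\qquad \infnorm{QX+P}\le e^{-t-r(t)} .
\]
Attainment is automatic: $\mathcal{V}^2_{\mathrm{prim}}g$ is discrete, since its rows lie in a lattice, so only finitely many frames have norm below any bound and the supremum is a maximum. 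We may therefore work with a single frame.

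For the direction from dynamics to approximability, suppose the displayed inequalities hold along an unbounded sequence $t_k\to\infty$, realised by frames $(Q_k,P_k)$. Monotonicity of $\psi$ and \eqref{eq:rt_def} give
\[
\infnorm{Q_kX+P_k}\le e^{-t_k-r(t_k)}=\psi(e^{t_k-r(t_k)})\le\psi(\infnorm{Q_k}),
\]
where $Q_k\neq0$ is needed for $\psi(\infnorm{Q_k})$ to make sense; this is handled below. It remains to show that infinitely many of the $(Q_k,P_k)$ are distinct. If one frame recurred infinitely often, then $\infnorm{QX+P}\le e^{-t_k-r(t_k)}$ would tend to $0$ provided $t+r(t)\to\infty$. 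Then $QX+P=0$, and this degenerate case should be excluded or absorbed, for instance by assuming $\psi(q)\to0$ and treating the null set of such $X$ separately. So we need $t+r(t)\to\infty$. Here I would argue from \eqref{eq:rt_def}: if $r(t)$ stays bounded above then $t-r(t)\to\infty$, so $\psi(e^{t-r})$ decreases and the relation forces $t+r\to\infty$. I would also check that $r(t)$ is well defined, since $s\mapsto\psi(e^{t-s})e^{t+s}$ is increasing. Continuity issues for non-continuous $\psi$ are handled by replacing $=$ with the appropriate one-sided inequality.

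For the converse, let $(Q,P)$ be one of the infinitely many primitive frames with $\infnorm{QX+P}\le\psi(\infnorm{Q})$, and set $q=\infnorm{Q}$. We need a time $t$ with $e^{t-r(t)}\ge q$ and $e^{-t-r(t)}\ge\psi(q)$. The natural choice is the $t$ solving $e^{t-r(t)}=q$, that is, $t=\tfrac12\log(q/\psi(q))$ and $r=\tfrac12\log(1/(q\psi(q)))$, which is the standard change of variables. Then \eqref{eq:rt_def} gives $e^{-t-r(t)}=\psi(q)$ exactly. As $q\to\infty$ along the sequence, $t\to\infty$ because $\psi$ is non-increasing, so these times are unbounded. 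We also need the map $t\mapsto t-r(t)$ to be onto and nondecreasing so that such a $t$ exists; this again follows from monotonicity of $\psi$.

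The main obstacle is the bookkeeping of degenerate cases. The first is frames with $Q=0$: these have $P\in\GL_d(\ZZ)$ and $\infnorm{QX+P}\ge1$, so they are excluded once $\psi<1$. The second is the existence and monotonicity of $r(t)$ for a general non-increasing $\psi$. I would handle both by first reducing, without changing $W(\psi)$ or convergence of the series, to $\psi$ continuous, strictly decreasing and satisfying $q\psi(q)\to0$, and only then carrying out the two directions above.
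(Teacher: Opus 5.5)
Your proposal is correct and follows the paper's argument: both directions come from the identity $(Q,P)u_Xg_t^{-1}=(e^{-t}Q,\,e^{t}(QX+P))$, with $t$ chosen so that $e^{t-r(t)}=\infnorm{Q}$ and hence $e^{-t-r(t)}=\psi(\infnorm{Q})$; the paper simply asserts that the converse follows by reversing the steps. Your extra bookkeeping fills in what the paper leaves implicit: attainment of the supremum, discarding $Q=0$, distinctness of the frames, and the reduction to continuous $\psi$, which is legitimate since $W(\psi)$ only sees $\psi|_{\NN}$. You are also right that exact frames $QX+P=0$ need separate treatment---for $d=1$ a rational $x$ satisfies the dynamical condition but lies outside $W(\psi)$ once $\psi\to0$, while for $d\ge 2$ such $X$ do lie in $W(\psi)$ via the frames $(\delta Q,\delta P)$, $\delta\in\GL_d(\ZZ)$. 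However, the further reduction to strictly decreasing $\psi$ with $q\psi(q)\to0$ cannot in general be made without changing $W(\psi)$, and it is not needed, since only $\psi\to0$ enters your argument.
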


\begin{proof}
We define $t - r(t)$ to be the $x$-coordinate of the intersection of $y = \psi(x)$ and $y = e^{-2t}x$, where the monotonicity of $\psi$ ensures the existence and uniqueness of such a point, making $r(t)$ well-defined. Suppose $\infnorm{QX+P} \le \psi(\infnorm{Q})$. Let $\infnorm{Q} = e^{t-r(t)}$. By \eqref{eq:rt_def}, the error is bounded by $e^{-t-r(t)}$. Applying the diagonal flow $g_t^{-1}$, the vector $v = (Q, P)u_Xg_t^{-1}$ satisfies:
\[
\infnorm{v} = \max\left( e^t \infnorm{QX+P}, e^{-t} \infnorm{Q} \right) \le \max(e^{-r(t)}, e^{-r(t)}) = e^{-r(t)}.
\]
Thus, $\Delta(\Gamma u_Xg_t^{-1}) \ge r(t)$. The converse follows precisely by reversing these steps.
\end{proof}

Via the Dani correspondence, Khintchine's theorem can be reformulated as a shrinking target problem in the space $\mathcal{X}$. Following Kleinbock and Margulis, a continuous function $\Delta: \mathcal{X} \to \RR$ is said to be $\varrho$-distance-like if it satisfies 
\[
    m_{\mathcal{X}}(\{\Gamma g \in \mathcal{X} : \Delta(\Gamma g) \geq r\}) \asymp e^{-\varrho r}.
\]
This distance-like property enables us to apply the Borel-Cantelli machinery developed in \cite{KM99}, yielding the following theorem.

\begin{theorem}[Theorem 1.7 in \cite{KM99}]\label{thm:BC}
Let $G$ be a connected semisimple Lie group without compact factors, $\Gamma$ be an irreducible lattice in $G$, $\mathfrak{a}$ be a Cartan subalgebra of the Lie algebra of $G,$ and $\mathbf{z} \in \mathfrak{a}$. Let $m_{\mathcal{X}}$ be the $G$-invariant probability measure on $\mathcal{X} = \Gamma \backslash G$. Then:

(a) If $\Delta$ is $\varrho$-distance-like for some $\varrho > 0$, then the family of sets $\{A_{r(t)}\}_{t \geq 0}$ satisfies
\[
m_{\mathcal{X}}\left(\{x \in \mathcal{X}: x \exp(t\mathbf{z}) \in A_{r(t)} \text{ for infinitely many } t \in \NN\}\right) =  \begin{cases}
    0 & \text{if } \sum_{t=1}^\infty m_{\mathcal{X}}(A_{r(t)}) < \infty, \\
    1 & \text{if }  \sum_{t=1}^\infty m_{\mathcal{X}}(A_{r(t)}) = \infty.
\end{cases}
\]

(b) If $\Delta$ is $\varrho$-distance-like for some $\varrho > 0$, then for almost all $x \in \mathcal{X}$, one has 
\[
\lim_{t \rightarrow \infty}\frac{\Delta(\exp(t\mathbf{z}) x)}{\log{t}} = \frac{1}{\varrho}.
\]
\end{theorem}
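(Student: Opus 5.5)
The plan follows the Kleinbock--Margulis strategy: reduce both parts to a quasi-independent Borel--Cantelli lemma for the sets $A_{r(t)}$ pulled back along the flow $x \mapsto x\exp(t\mathbf{z})$, with quasi-independence supplied by exponential mixing on $\mathcal{X}$. The convergence case of (a) is immediate and needs no dynamics. The flow $\exp(t\mathbf{z})$ preserves $m_{\mathcal{X}}$, so
\[
m_{\mathcal{X}}\big(\{x : x\exp(t\mathbf{z}) \in A_{r(t)}\}\big) = m_{\mathcal{X}}(A_{r(t)}),
\]
and the first Borel--Cantelli lemma gives measure $0$ when $\sum_t m_{\mathcal{X}}(A_{r(t)}) < \infty$.

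For the divergence case I would invoke the quantitative Borel--Cantelli lemma of Sprindžuk type. If $E_t \subset \mathcal{X}$ satisfy
\[
\sum_{s,t \le T} \big( m(E_s \cap E_t) - m(E_s)m(E_t) \big) \ll \sum_{t \le T} m(E_t),
\]
then almost every point lies in infinitely many $E_t$, with the counting asymptotic. We apply it to $E_t = A_{r(t)}\exp(-t\mathbf{z})$. The correlations $m(E_s\cap E_t) = \langle \mathds{1}_{A_{r(s)}}, \mathds{1}_{A_{r(t)}}\circ \exp((t-s)\mathbf{z})\rangle$ are matrix coefficients of the regular representation on $L^2_0(\mathcal{X})$. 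Because $G$ is semisimple without compact factors and $\Gamma$ is irreducible, $L^2_0(\mathcal{X})$ has a spectral gap (Kazhdan property (T) for higher rank factors, and for $\SL_2$-type factors the Kleinbock--Margulis / Clozel input for congruence lattices; here $\Gamma = \Sp_{2d}(\ZZ)$). This gives
\[
|\langle \varphi, \phi\circ \exp(u\mathbf{z})\rangle - \textstyle\int\varphi\int\phi| \ll e^{-\kappa |u|}\,\|\varphi\|_{\ell}\,\|\phi\|_{\ell}
\]
for smooth functions in a Sobolev norm $\|\cdot\|_\ell$.

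The \textbf{main obstacle} is that indicators of $A_r$ are not smooth, so they must be sandwiched between smooth functions. Here the $\varrho$-distance-like hypothesis is used. Since $\Delta$ is uniformly continuous along $G$ (moving by $h$ in a $\delta$-ball changes $\Delta$ by $O(\delta)$), convolving $\mathds{1}_{A_{r\pm\delta}}$ with a bump function on a $\delta$-ball of $G$ yields functions $\phi^\pm$ with
\[
\mathds{1}_{A_{r+\delta}} \le \phi^- \le \mathds{1}_{A_r} \le \phi^+ \le \mathds{1}_{A_{r-\delta}}, \qquad \|\phi^\pm\|_\ell \ll \delta^{-\ell'}.
\]
The condition $m(A_r)\asymp e^{-\varrho r}$ guarantees $\int\phi^\pm \asymp m(A_r)$ uniformly in $r$ for fixed $\delta$. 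Taking $\delta$ fixed and splitting the correlation sum into $|t-s| \le C\log T$, where we bound trivially by $m(A_{r(t)})$, and $|t-s| > C\log T$, where mixing makes the error $O(T^{-2})$, yields the required quasi-independence. The one delicate point is the diagonal band: one has to check that the logarithmic width is absorbed. I would handle this as Kleinbock--Margulis do, either by reducing to $\sum_t m(A_{r(t)}) = \infty$ along a sparse subsequence, or by using that the sum over $|t-s|\le C\log T$ contributes at most $\log T\cdot\sum_t m(E_t)$, which is $o\big((\sum_t m(E_t))^2\big)$ and still suffices for the almost-everywhere conclusion.

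Part (b) then follows from (a) with constant targets $r(t) = \tfrac{c}{\varrho}\log t$. Then $m(A_{r(t)}) \asymp t^{-c}$, whose sum diverges iff $c \le 1$. For $c>1$ the convergence case gives $\limsup_t \Delta/\log t \le c/\varrho$ almost surely; letting $c\downarrow 1$ gives the upper bound $1/\varrho$. For $c=1$ the divergence case gives $\Delta(x\exp(t\mathbf{z})) \ge \tfrac{1}{\varrho}\log t$ infinitely often, which gives the matching lower bound. Passing from integer $t$ to real $t$ costs only $O(1)$ by the Lipschitz property of $\Delta$ along the flow.
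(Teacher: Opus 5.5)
Your architecture is the Kleinbock--Margulis one (the paper gives no proof and only cites \cite{KM99}), and the convergence half of (a) is correct. The divergence half, however, has a genuine gap as written, for two reasons.

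First, the bound $\|\phi^\pm\|_\ell\ll\delta^{-\ell'}$, uniform in $r$, is too weak. It only bounds the total mixing error by $\sum_{s,t\le T}e^{-\kappa|t-s|}\asymp T$, whereas $\Sigma_T:=\sum_{t\le T}m_{\mathcal X}(A_{r(t)})=o(T)$ once $r(t)\to\infty$. This is what forces your band $|t-s|\le C\log T$, and neither rescue works in general:
\begin{itemize}
\item The claim $\log T\cdot\Sigma_T=o(\Sigma_T^2)$ needs $\Sigma_T/\log T\to\infty$. It is only $O(\Sigma_T^2)$ when $m_{\mathcal X}(A_{r(t)})\asymp 1/t$, which is exactly the case you use in (b). It is false when $m_{\mathcal X}(A_{r(t)})\asymp 1/(t\log t)$, where $\Sigma_T\asymp\log\log T$.
\item Thinning to gaps $\gtrsim\log t$ turns the latter series into $\sum 1/(t\log^2 t)<\infty$.
\end{itemize}
Such slowly divergent series genuinely occur in Theorem~\ref{thm:sk}, since $\sum_q q^{\varsigma-1}\psi(q)^\varsigma$ may diverge arbitrarily slowly.

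Second, applying the quasi-independence criterion to the \emph{sets} $E_t$ via the sandwich only yields
$m_{\mathcal X}(E_s\cap E_t)\le\int\phi_s^+\int\phi_t^++\mathrm{err}\le C\,m_{\mathcal X}(E_s)m_{\mathcal X}(E_t)+\mathrm{err}$,
with some $C>1$ coming from the $\asymp$ in the distance-like hypothesis. The resulting term $(C-1)\Sigma_T^2$ is not $\ll\Sigma_T$. The set version would give only positive measure, via a Kochen--Stone bound, and would then need a separate zero--one law.

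The missing idea is that the smoothed indicators have Sobolev norm controlled by the \emph{measure} of the target. With $L^2$-Sobolev norms and $\phi=\psi*\mathds{1}_A$ for a bump $\psi$ on a small ball in $G$, all derivatives fall on $\psi$, so $\|\phi\|_\ell\ll_\delta m_{\mathcal X}(A)^{1/2}$. Use only inner approximations $\mathds{1}_{A_{r(t)+2\delta}}\le\phi_t\le\mathds{1}_{A_{r(t)}}$, so that $\|\phi_t\|_\ell\ll m_t^{1/2}$ with $m_t:=m_{\mathcal X}(A_{r(t)})$. Then apply Sprind\v{z}uk's lemma to the \emph{functions} $f_t(x)=\phi_t(x\exp(t\mathbf z))$.

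Mixing is now used for every pair, with no band:
$\sum_{M\le s,t\le N}\bigl|\int f_sf_t-\int f_s\int f_t\bigr|\ll\sum_{M\le s,t\le N}e^{-\kappa|s-t|}\sqrt{m_sm_t}\ll\sum_{M\le t\le N}m_t$,
by $\sqrt{ab}\le(a+b)/2$. Hence for a.e.\ $x$,
$\sum_{t\le T}\mathds{1}_{A_{r(t)}}(x\exp(t\mathbf z))\ge\sum_{t\le T}f_t(x)\sim\sum_{t\le T}\int\phi_t\to\infty$.
The divergence uses $\int\phi_t\ge m_{\mathcal X}(A_{r(t)+2\delta})\gg m_t$, and this is where the distance-like property enters.

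You also rely on two hypotheses that the statement omits and you should make explicit: $\mathbf z\neq 0$, and uniform continuity of $\Delta$. The latter is part of the Kleinbock--Margulis definition of distance-like, but not of the definition in this paper.

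Finally, your deduction of (b) proves $\limsup_{t\to\infty}\Delta/\log t=1/\varrho$, which is what \cite{KM99} assert. The $\lim$ in the statement as written is false: by ergodicity, almost every orbit returns infinitely often to a region where $\Delta$ is bounded, which forces $\liminf\le 0$.
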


In the subsequent subsection, we will prove that our height function $\Delta$ is indeed $\varrho$-distance-like with $\varrho = d^2 + d$. Establishing this property directly yields our critical Khintchine dichotomy condition. Recalling the relation $e^{-2r(t)} = q\psi(q)$ from \eqref{eq:rt_def}, where $q = e^{t-r(t)}$, the sum of the measures of the shrinking targets satisfies
\begin{equation}\label{eq:dichotomy_condition}
\sum_{t \geq 1} m_{\mathcal{X}}(A_{r(t)})
\;\asymp\; \sum_{t \geq 1} e^{-\varrho r(t)}
\;\asymp\; \sum_{q \geq 1} q^{\varsigma - 1}\psi(q)^{\varsigma},
\qquad \varsigma = \varrho/2 = d(d+1)/2,
\end{equation}
so that the divergence or convergence of the right-hand series governs the measure of the set of $\psi$-approximable symmetric matrices. The second equivalence in \eqref{eq:dichotomy_condition} requires justification since the substitution $q = e^{t - r(t)}$ does not directly imply $\dd t \asymp \dd q/q$ pointwise. The equiconvergence of the two series follows from integration by parts relying on the monotonicity condition of $\psi$. This is the standard transition between the dynamical and arithmetic formulations of Khintchine-type laws and is carried out in detail in \cite[\S 8]{KM99}.

\subsection{Volume Scaling and Upper Bound for Cusp Decay}\label{subsec:upper_bound}

Let $S_\varepsilon$ be the supremum norm ball of frames with norm $\leq \varepsilon$. The measure $m_{\mathcal{X}}(A_R)$ is controlled by the expected number of primitive frames in $S_{e^{-R}}$, namely $m_{\mathcal{V}^2}(S_{e^{-R}})$. We obtain the following tail estimate.
\begin{equation}\label{eq:tail}
m_{\mathcal{X}}(A_R) \asymp e^{-\varrho R}, \quad \varrho = d^2+d.
\end{equation}
It is less than the dimension of the space $\mathcal{V}^2$,
\[
\dim(\mathcal{V}^2) = \dim(G) - \dim(\mathrm{Stab}(F)) = (2d^2+d) - \frac{d(d+1)}{2} = \frac{3d^2+d}{2}.
\]

This occurs because the $G$-invariant measure $m_{\mathcal{V}^2}$ is not comparable to Lebesgue measure on $\mathcal{V}^2$.

\begin{lemma}\label{lem:vol_lagrangian_ball}
    Let $m_{\mathcal{V}^2}$ be a $G$-invariant measure on $N \backslash G$ and $S_{\varepsilon} := \{x \in \mathcal{V}^2: \infnorm{x} < \varepsilon\}$. 
    Then 
    \[
    m_{\mathcal{V}^2}(S_{\varepsilon}) \asymp \varepsilon^{\varrho}.
    \]
\end{lemma}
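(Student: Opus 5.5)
We compute $m_{\mathcal{V}^2}(S_\varepsilon)$ directly in the Iwasawa coordinates of the preceding proposition, where $\dd m_{\mathcal{V}^2}(Ng) \asymp e^{2\rho(\mathbf{t})}\dd u\dd\mathbf{t}\dd k$. Write $g = u a_{\mathbf{t}} k$ with $u \in U$, $a_{\mathbf{t}} \in A$, $k \in K$. The frame is $x = Fg = (0\ \ I_d)\,u a_{\mathbf{t}} k = (0\ \ u\,\diag(e^{t_1},\dots,e^{t_d}))\,k$, with $u$ lower unipotent. Since $k$ is orthogonal, $\infnorm{x}$ is comparable, up to constants depending only on $d$, to the Frobenius norm of the $d\times d$ block $M := u\,\diag(e^{t_1},\dots,e^{t_d})$. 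Thus $S_\varepsilon$ is sandwiched between sets of the form $\{\|M\|_F < c\varepsilon\}$ for two constants $c$, and $k$ contributes only the finite factor $\int_K \dd k$. The problem therefore reduces to estimating
\[
I(\varepsilon) := \int_{\{\|u\, e^{\mathbf{t}}\| < \varepsilon\}} e^{2\rho(\mathbf{t})}\dd u\dd\mathbf{t}.
\]

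By homogeneity, the substitution $t_i \mapsto t_i + \log\varepsilon$ scales $M$ by $\varepsilon$ and leaves $\dd u$ unchanged. The integrand picks up the factor $e^{2\rho(\log\varepsilon\cdot\mathbf{1})} = \varepsilon^{2\sum_i (d+1-i)} = \varepsilon^{d(d+1)} = \varepsilon^{\varrho}$. Hence $I(\varepsilon) = \varepsilon^{\varrho} I(1)$, and the lemma follows once we show $0 < I(1) < \infty$. Positivity is clear, since the region contains a neighborhood in which $\mathbf{t}$ is very negative and $u$ is near the identity.

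For finiteness, the entries of $M$ are $M_{ij} = u_{ij}e^{t_j}$ for $j<i$ and $M_{ii} = e^{t_i}$. The constraint $\|M\|<1$ forces $t_i < 0$ for every $i$ and $|u_{ij}| < e^{-t_j}$ for each $j<i$. Integrating out $u$ gives $\int \dd u \le \prod_{j<i} 2e^{-t_j} \ll e^{-\sum_j (d-j)t_j}$, since column $j$ of $u$ has $d-j$ free entries. This leaves
\[
I(1) \ll \int_{t_i<0} e^{\sum_j (2(d+1-j) - (d-j))t_j}\dd\mathbf{t} = \int_{t_i<0} e^{\sum_j (d+2-j)t_j}\dd\mathbf{t},
\]
which is finite because every exponent $d+2-j \ge 2$ is positive. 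For the lower bound, we restrict to $|u_{ij}| < \tfrac{1}{2d}e^{-t_j}$ and $t_i < -C$, which gives the same expression up to constants.

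The main point requiring care is the correct normalization of the Iwasawa coordinates: we must confirm that $F$ applied to $UAK$ gives exactly $u\,e^{\mathbf{t}}$, with the sign convention $e^{+t_i}$ on the lower block, and that this convention matches the modular factor $\rho(\mathbf{t})$ in \eqref{eq:modular_factor}. If the sign conventions instead produced $e^{-t_i}$, the exponent would read $\varepsilon^{-\varrho}$, which is absurd. We would therefore check the convention by matching the case $d=1$, where the measure must be Lebesgue on $\RR^2$ and the integral must scale like $\varepsilon^2 = \varepsilon^{\varrho}$. The convergence in $u$ also needs the triangular structure of $M$ to bound the entries in the order written above, but this is routine once the convention is fixed.
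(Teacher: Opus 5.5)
Your proposal is correct and is essentially the paper's argument: the same Iwasawa coordinates $Fg=(0,\,ue^{\mathbf t})k$ with density $e^{2\rho(\mathbf t)}$, the same comparison of norms under $K$, and the same integration over $u$ (giving $e^{-\sum_j (d-j)t_j}$) followed by the $\mathbf t$-integral with positive exponents $d+2-j$. The only difference is organizational: you extract $\varepsilon^{\varrho}$ first via the translation $\mathbf t\mapsto \mathbf t+(\log\varepsilon)\mathbf 1$ and then check $0<I(1)<\infty$, whereas the paper carries $\varepsilon$ through the explicit integral and uses the same dilation scaling only later, in Corollary~\ref{cor:lbd}.
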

\begin{proof}
For convenience, define $\iota : \GL_d(\RR) \rightarrow G$ as the block diagonal embedding into $G$ given by
\[
\iota(A) = \begin{pmatrix}
    (A\trans)^{-1} & 0 \\ 0 & A
\end{pmatrix}.
\]
Denote $\iota(e^{\mathbf{t}}) = \diag(e^{-t_1}, \dots, e^{-t_d}, e^{t_1}, \dots, e^{t_d})$ for $\mathbf{t} = (t_1, \dots, t_d)$.
It suffices to consider the set $S_{\varepsilon} = \{N g \in N \backslash G: \infnorm{F g} < \varepsilon\}$. Using Iwasawa coordinates, we have $F g = (0, ue^{\mathbf{t}})k$ for any representative $g$. Since $K$ is compact, we have a uniform constant $C_K$ satisfying 
\[
C_K^{-1}\infnorm{x} \leq \infnorm{xk} \leq C_K\infnorm{x} \;\; \forall k \in K, \; \forall x \in \mathcal{V}^2.
\]
Hence, $m_{\mathcal{V}^2}(\{Ng \in N\backslash G:\infnorm{F g} < \varepsilon\})$ is comparable to $m_{\mathcal{V}^2}(\{N\iota(e^{\mathbf{t}})\iota(u)k: \infnorm{ue^{\mathbf{t}}} \leq  \varepsilon\})$. Explicitly, we have
\[
(ue^{\mathbf{t}})_{ii} = e^{t_i}, \qquad (ue^{\mathbf{t}})_{ij} = u_{ij}e^{t_j} \quad (i > j).
\]
Thus the constraint $\infnorm{ue^{\mathbf{t}}} \leq \varepsilon$ implies
\[
e^{t_i} < \varepsilon, \qquad |u_{ij}| < \varepsilon e^{-t_j} \quad (i > j).
\]
Then 
\[
\int_{U}\mathds{1}_{\{\infnorm{ue^{\mathbf{t}}} < \varepsilon\}}\dd u \asymp \prod_{1\leq j < i \leq d}(\varepsilon e^{-t_j}) = \varepsilon^{\frac{d(d-1)}{2}}\exp(-\sum_{i=1}^{d}(d-i)t_i).
\]
Let $\rho(\mathbf{t})$ be the modular factor in \eqref{eq:modular_factor}. Using Fubini's theorem and the integration formula for Iwasawa coordinate, we have
\begin{align*}
    m_{\mathcal{V}^2}(S_{\varepsilon}) &\asymp \int_K\int_{\RR^d}e^{2\rho(\mathbf{t})}\int_U \mathds{1}_{\{\infnorm{ue^{\mathbf{t}}} < \varepsilon\}}\dd u\;\dd\mathbf{t}\;\dd k  \\
    &\asymp \varepsilon^{\frac{d(d-1)}{2}}\int_{t_i < \log\varepsilon}\exp(2\sum_{i = 1}^d(d+1-i)t_i)\exp(-\sum_{i=1}^d(d-i)t_i)\dd\mathbf{t} \\
    &= \varepsilon^{\frac{d(d-1)}{2}}\int_{t_i < \log\varepsilon}\exp(\sum_{i=1}^d(d-i+2)t_i)\dd\mathbf{t} \\
    &\asymp \varepsilon^{\frac{d(d-1)}{2} + \sum_{i = 1}^d(d-i+2)} = \varepsilon^{\varrho}.
\end{align*}

\end{proof}

\begin{corollary}\label{cor:ubd}
    Let $A_R = \{\Gamma g \in \mathcal{X}: \Delta(\Gamma g) > R\}$. Then 
    \[
    m_{\mathcal{X}}(A_R) \ll e^{-\varrho R}.
    \]
\end{corollary}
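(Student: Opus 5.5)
This follows from a Markov-type inequality applied to the Siegel transform of the indicator of the frame ball, combined with the Siegel mean-value formula (Proposition~\ref{prop:siegel_mean_value_formula}) and the volume estimate of Lemma~\ref{lem:vol_lagrangian_ball}.

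First, let $\varepsilon = e^{-R}$ and $f = \mathds{1}_{S_{\varepsilon}}$. Since $S_\varepsilon$ is not compact in $\mathcal{V}^2$ (it approaches the degenerate locus), apply the mean-value formula to truncations $f_n = \mathds{1}_{S_\varepsilon \cap C_n}$. Here $C_n$ is an exhaustion of $\mathcal{V}^2$ by compact sets. Monotone convergence on both sides then extends the identity
\[
\int_{\mathcal{X}} \widehat{\mathds{1}_{S_\varepsilon}}\dd m_{\mathcal{X}} = m_{\mathcal{V}^2}(S_\varepsilon)
\]
to $f$ itself, with both sides finite by Lemma~\ref{lem:vol_lagrangian_ball}.

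Next, compare sets. If $\Gamma g \in A_R$, then $\Delta(\Gamma g) > R$. By definition of the supremum there is $v \in \mathcal{V}^2_{\mathrm{prim}}$ with $\log(1/\infnorm{vg}) > R$, that is, $\infnorm{vg} < e^{-R} = \varepsilon$, so $vg \in S_\varepsilon$. Hence $\widehat{\mathds{1}_{S_\varepsilon}}(\Gamma g) \ge 1$ on $A_R$, and the transform is a nonnegative integer-valued function. Two points need checking here. The transform must be well defined on $\Gamma\backslash G$, which holds because $\mathcal{V}^2_{\mathrm{prim}}\gamma = \mathcal{V}^2_{\mathrm{prim}}$ for $\gamma\in\Gamma$ by Lemma~\ref{lem:prim_lag}. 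Also, $\mathcal{V}^2_{\mathrm{prim}} g$ consists of genuine Lagrangian frames in $\mathcal{V}^2$, since $G$ preserves $\mathcal{V}^2$.

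Then Chebyshev/Markov gives
\[
m_{\mathcal{X}}(A_R) \le \int_{\mathcal{X}} \widehat{\mathds{1}_{S_\varepsilon}}\dd m_{\mathcal{X}} = m_{\mathcal{V}^2}(S_{e^{-R}}) \ll e^{-\varrho R},
\]
which is the claim. The normalization constant of $m_{\mathcal{V}^2}$ fixed in Proposition~\ref{prop:siegel_mean_value_formula} is absorbed into the implied constant.

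The only genuinely delicate step is the justification of the mean-value formula for the non-compactly-supported indicator $\mathds{1}_{S_\varepsilon}$. This should be routine via monotone convergence, since all terms are nonnegative and the unfolding in the proposition only uses Tonelli. Accordingly, I would phrase the argument with truncations $f_n\uparrow f$, so that the earlier proposition is invoked only on compactly supported functions.
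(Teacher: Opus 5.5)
Your proposal is correct and is essentially the paper's own proof: you bound $m_{\mathcal{X}}(A_R)$ by the mean of the Siegel transform of $\mathds{1}_{S_{e^{-R}}}$ via Markov's inequality, then evaluate that mean using the Siegel mean-value formula and Lemma~\ref{lem:vol_lagrangian_ball}. Your truncation and monotone-convergence step adds rigor the paper leaves implicit. The paper applies the mean-value formula, which is stated for compactly supported $f$, directly to $\mathds{1}_{S_{e^{-R}}}$, whose support is not compact in $\mathcal{V}^2$; your argument covers that case.
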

\begin{proof}
    Define $N_{e^{-R}}(\Gamma g) = \#\{x \in \mathcal{V}^2_{\mathrm{prim}} g: \infnorm{x} < e^{-R}\}$. This is precisely the Siegel transform of $\mathds{1}_{S_{e^{-R}}}$, considered as a function on $(\mathcal{X}, m_{\mathcal{X}})$. By the Siegel mean-value theorem (Proposition~\ref{prop:siegel_mean_value_formula}) and Lemma~\ref{lem:vol_lagrangian_ball},
    \[
    \int_{\mathcal{X}}N_{e^{-R}}(\Gamma g)\dd m_{\mathcal{X}}(\Gamma g) = \int_{\mathcal{V}^2}\mathds{1}_{S_{e^{-R}}}(x)\dd m_{\mathcal{V}^2}(x) = m_{\mathcal{V}^2}(S_{e^{-R}}) \ll e^{-\varrho R}.
    \]
    Since the cusp neighborhood $A_R$ is defined by the existence of primitive Lagrangian frame, we have
    \[
    \Gamma g \in A_{R} \iff N_{e^{-R}}(\Gamma g) \geq 1.
    \]
    Therefore, by Markov's inequality,
    \[
    m_{\mathcal{X}}(A_R) = m_{\mathcal{X}}(\{\Gamma g: N_{e^{-R}}(\Gamma g) \geq 1\}) \leq \int_{\mathcal{X}}N_{e^{-R}}(\Gamma g)\dd m_{\mathcal{X}}(\Gamma g) \ll e^{-\varrho R}. \qedhere
    \]
\end{proof}

\subsection{Second Moment for Siegel Transform and Lower Bound for Cusp Decay}\label{subsec:lower_bound}

The second moment estimates for the Siegel transform date back to Rogers, who derived an explicit second moment formula for unimodular lattices in connection with lattice point counting problems \cite{Rog55, Rog56, Sch60}. While these results were extended to the symplectic setting by Kelmer and Yu \cite{KY21}, a key distinction in our approach is that we are concerned with counting \emph{Lagrangian frames} rather than individual lattice points. 

To establish the matching lower bound for the cusp decay and deduce the distance-like property of $\Delta$, we must control the second moment of our counting function. Let $N_{e^{-R}}$ denote the number of short primitive Lagrangian frames in a lattice. By the Paley--Zygmund inequality, we have for any $\beta \in (0,1)$,
\begin{equation}\label{eq:pzineq}
    m_{\mathcal{X}}\left(\left\{ \Lambda \in \mathcal{X} : N_{e^{-R}}(\Lambda) \geq \beta \mathbb{E}[N_{e^{-R}}] \right\}\right) \geq (1-\beta)^2\frac{\mathbb{E}[N_{e^{-R}}]^2}{\mathbb{E}[N_{e^{-R}}^2]}.
\end{equation}

One might naively expect that $\mathbb{E}[N_{e^{-R}}^2] \ll \mathbb{E}[N_{e^{-R}}]$ for sufficiently large $R$. However, it turns out that this inequality is false in general. Exploiting the symplectic lattice structure and appealing to Corollary \ref{cor:lag}, we obtain, for sufficiently small $\varepsilon>0$,
\begin{align}\label{eq:secmt}
\begin{split}
\int_{\mathcal{X}}\widehat{\mathds{1}}_{S_{\varepsilon}}^2(\Gamma g)\,\dd m_{\mathcal{X}}(\Gamma g)
&= \int_{\mathcal{X}}\sum_{x, y \in \mathcal{V}^2_{\mathrm{prim}}}\mathds{1}_{S_{\varepsilon}}(xg)\mathds{1}_{S_{\varepsilon}}(yg)\,\dd m_{\mathcal{X}}(\Gamma g) \\
&= \int_{\mathcal{X}}\sum_{\delta \in \GL_d(\ZZ)}\sum_{x \in \mathcal{V}^2_{\mathrm{prim}}}\mathds{1}_{S_{\varepsilon}}(xg)\mathds{1}_{S_{\varepsilon}}(\delta xg)\,\dd m_{\mathcal{X}}(\Gamma g) \\
&= \sum_{\delta \in \GL_d(\ZZ)}\int_{\mathcal{V}^2}\mathds{1}_{S_{\varepsilon}}(x)\mathds{1}_{S_{\varepsilon}}(\delta x)\,\dd m_{\mathcal{V}^2}(x) \\
&= \sum_{\delta \in \GL_d(\ZZ)}m_{\mathcal{V}^2}\left(S_{\varepsilon} \cap \delta^{-1}S_{\varepsilon}\right).
\end{split}
\end{align}

But this sum diverges for $d > 2$ due to overcounting valid Lagrangian frames. The cusp condition is essentially the existence of at least one integral frame of the required size. But the summation considers all short frames and this leads to a heavy-tailed Siegel transform. To deal with this problem, we choose a subset in $\mathcal{V}^2$ such that every element in the domain cannot be made smaller than the original frame by the non-trivial $\delta \in \GL_d(\ZZ)$ action. From now on, we use the Frobenius norm $\|\cdot\|_F$ in $\mathcal{V}^2$. Since all norms are equivalent on a finite dimensional space, this choice does not affect the volume up to a multiplicative constant. Denote $B_{r} = \{v \in \mathcal{V}^2: \|v\|_F < r\}$

\begin{definition}[Domain of rigid frames]\label{def:rigid_frame}   
Let $\mathrm{W}$ be the subgroup of $\GL_d(\ZZ)$ consisting of all signed permutation matrices. Define 
\[
\rho: \mathcal{V}^2 \rightarrow \RR_{>0}, \;v \mapsto \min_{\delta \in \GL_d(\ZZ)}\|\delta v\|_F.
\]
Fix an enumeration of the coset space $\mathrm{W} \backslash \GL_d(\ZZ)$ as 
\[
\{\mathrm{W}\delta^{(0)}, \mathrm{W}\delta^{(1)}, \dots\} \qquad \qquad  \delta^{(0)} = I_d.
\] 
Define an index function 
\[
k(v) = \max\{i: \|\delta^{(i)} v\|_F = \rho(v)\}.
\]
and set
\[
\Omega_k = \{v \in \mathcal{V}^2: k(v) = k\}.
\]
We call $\Omega_k$ the domain of $k$-th rigid frames; in particular, $\Omega_0$ is called the domain of rigid frames.
\end{definition}
Note that the function $\rho$ is well-defined because there are finitely many candidates for the shortest integral frames. For $v \in \Omega_0$, it satisfies $\|\delta v\|_F \geq \|v\|_F$ for all $\delta \in \GL_d(\ZZ)$ and equality forces $\delta \in \mathrm{W}$. Consequently, whenever there exists a sufficiently small \emph{rigid} integral frame, the sum over all $\delta \in \GL_d(\ZZ)$ can be reduced to a sum over $\delta \in \mathrm{W}$. 
Let $\mathds{1}_{B_{\varepsilon} \cap \Omega_0}$ be the indicator function on $B_{\varepsilon} \cap \Omega_0$. Since $B_{\varepsilon} \cap \Omega_0 \subset B_{\varepsilon}$, we have a pointwise inequality
\[
\widehat{\mathds{1}}_{B_{\varepsilon}} \geq \widehat{\mathds{1}}_{B_{\varepsilon} \cap \Omega_0}.
\]
Moreover, because we restrict to the rigid frames, we have 

\[
\sum_{\delta \in \GL_d(\ZZ)}m_{\mathcal{V}^2}(B_{\varepsilon} \cap \Omega_0 \cap \delta^{-1}(B_{\varepsilon} \cap \Omega_0)) = \sum_{\delta \in \mathrm{W}}m_{\mathcal{V}^2}(B_{\varepsilon}\cap \Omega_0) = |\mathrm{W}|m_{\mathcal{V}^2}(B_{\varepsilon}\cap \Omega_0)
\] where $\lvert \mathrm{W} \rvert = 2^d d!$ is the order of the signed permutation group.
Denote the Siegel transform restricted to rigid frames as 
\[
N_{\varepsilon}'(\Gamma g):= \widehat{\mathds{1}}_{B_\varepsilon \cap \Omega_0}(\Gamma g) = \sum_{x \in \mathcal{V}^2_{\mathrm{prim}}}\mathds{1}_{B_{\varepsilon} \cap \Omega_0}(xg).
\]
With the rigidity restriction, the second-moment computation just as in \eqref{eq:secmt} collapses to a finite sum over $\mathrm{W}$ and yields
\[
\mathbb{E}[(N_{\varepsilon}')^2] = |\mathrm{W}|\mathbb{E}[N_{\varepsilon}'] = |\mathrm{W}|\nu(B_{\varepsilon} \cap \Omega_0),
\]
so in particular the second moment is comparable to the first moment by a constant independent of $\varepsilon$. It remains to show that the ratio of the volumes $m_{\mathcal{V}^2}(B_{\varepsilon} \cap \Omega_0)$ and $m_{\mathcal{V}^2}(B_{\varepsilon})$ is bounded below by a positive constant independent of $\varepsilon$.

\begin{lemma}[Measurability of $\Omega_k$]\label{lem:msr}
    Let $\Omega_k$ be a $k$-th rigid domain in $\mathcal{V}^2$. Then for each $k$, the set $\Omega_k$ is measurable.
\end{lemma}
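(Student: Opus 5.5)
The plan is to write each $\Omega_k$ as a countable Boolean combination of sets of the form $\{v : \|\delta v\|_F \le \|\delta' v\|_F\}$ and $\{v : \|\delta v\|_F = \|\delta' v\|_F\}$, where $\delta, \delta' \in \GL_d(\ZZ)$. Each such set is closed in $\mathcal{V}^2$, because $v \mapsto \|\delta v\|_F$ is continuous. Since $\GL_d(\ZZ)$ is countable, the whole description involves only countably many closed sets, so $\Omega_k$ is Borel.

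\textbf{Step 1.} I will show that $\rho$ is measurable. By definition $\rho(v) = \inf_{\delta \in \GL_d(\ZZ)} \|\delta v\|_F$. This is an infimum of countably many continuous functions, so it is upper semicontinuous and in particular Borel. The infimum is attained, as noted after Definition~\ref{def:rigid_frame}. To justify this, fix $v$ with rows spanning $\RR^d \otimes$ (row space). The map $\delta \mapsto \delta v$ is injective because $v$ has rank $d$. Moreover, $\|\delta v\|_F \ge \sigma_{\min}(v)\|\delta\|_F$, where $\sigma_{\min}(v)$ is the smallest singular value of $v$. Hence only finitely many $\delta$ satisfy $\|\delta v\|_F \le \|v\|_F$.

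\textbf{Step 2.} For each $i$, set $E_i = \{v : \|\delta^{(i)} v\|_F = \rho(v)\}$. This set is Borel, being the set where two Borel functions agree. Equivalently,
\[
E_i = \bigcap_{\delta \in \GL_d(\ZZ)} \{v : \|\delta^{(i)} v\|_F \le \|\delta v\|_F\},
\]
which is a countable intersection of closed sets. The coset representatives play no role here, since the Frobenius norm is invariant under $\mathrm{W}$: for $w \in \mathrm{W}$ we have $\|w\delta v\|_F = \|\delta v\|_F$.

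\textbf{Step 3.} By Step 1, the set $\{i : v \in E_i\}$ is finite and nonempty for every $v$. Therefore $k(v)$ is a well-defined maximum, and
\[
\Omega_k = E_k \setminus \bigcup_{i > k} E_i,
\]
which is a countable Boolean combination of Borel sets. Hence $\Omega_k$ is Borel and therefore $m_{\mathcal{V}^2}$-measurable.

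The main obstacle is the finiteness assertion in Step 1, because without it $k(v)$ could be $+\infty$. I will handle it with the singular-value bound $\|\delta v\|_F^2 = \tr(\delta v v\trans \delta\trans) \ge \lambda_{\min}(vv\trans)\|\delta\|_F^2$. The quantity $\lambda_{\min}(vv\trans)$ is positive because $vv\trans = QQ\trans + PP\trans$ is invertible for $v = (Q,P) \in \mathcal{V}^2$. Combined with the discreteness of $\mathrm{Mat}_d(\ZZ)$, this gives the required finiteness.
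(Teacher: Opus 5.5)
Your proof is correct and follows essentially the same route as the paper. You show $\rho$ is Borel as a countable infimum of continuous functions, and your decomposition $\Omega_k = E_k \setminus \bigcup_{j>k} E_j$ is exactly the paper's $\{\|\delta^{(k)}v\|_F=\rho(v)\}\cap\bigcap_{j>k}\{\|\delta^{(j)}v\|_F>\rho(v)\}$. Your bound $\|\delta v\|_F \ge \lambda_{\min}(vv\trans)^{1/2}\|\delta\|_F$ adds a justification of the finiteness of minimizers, and hence of the well-definedness of $k(v)$, which the paper only asserts in passing.
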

\begin{proof}
    By the definition, we have
    \[
    \Omega_k = \{v \in \mathcal{V}^2: \|\delta^{(k)}v\|_F = \rho(v)\} \cap \bigcap_{j > k}\{v \in \mathcal{V}^2: \|\delta^{(j)}v\|_F > \rho(v)\}
    \]
    Since $\rho$ is the minimum of a countable family of continuous functions, it is measurable. Measurability of $\Omega_k$ follows since $v \mapsto \|\delta^{(j)}v\|_F$ is continuous for each $j$.
\end{proof}

To derive conditions ensuring that the minimizer of the Frobenius norm lies in $\mathrm{W}$, we introduce the following energy function.

\begin{definition}[Frobenius energy]\label{def:4.8}
For $v \in \mathcal{V}^2$, define a Gram matrix of $v$
\[
\theta(v) = v v\trans \in \Sym_d^{+}(\RR)
\]
and the associated energy function
\[
\Phi_v: \GL_d(\ZZ) \rightarrow \RR_{>0}, \qquad \delta \mapsto \tr(\delta \theta(v)\delta\trans).
\]
Writing $\delta_1\trans, \dots, \delta_d\trans$ for the rows of $\delta$, we have
\[
\Phi_v(\delta) = \sum_{j=1}^d \delta_j\trans \theta(v)\delta_j.
\]
\end{definition}

Below two conditions need to force any minimizing $\delta$ to lie in $\mathrm{W}$.

\begin{definition}[$\eta$-coherence, and $M$-diagonal bound]\label{def:4.9}
Fix a parameter $\eta \in (0,1)$ and $M > 1$. Let $\Theta \in \Sym_d^{+}(\RR)$. We say $\Theta$ is \emph{$\eta$-coherent} if
\[
\lvert \Theta_{ij} \rvert < \eta \sqrt{\Theta_{ii}\Theta_{jj}} \qquad \text{for all } 1 \leq i < j \leq d.
\]
We say that $\Theta$ has the \emph{$M$-diagonal bound} if
\[
\max_{i}\Theta_{ii} < M\min_{i}\Theta_{ii}.
\]
\end{definition}

If $\theta(v)$ is $\eta$-coherent, then the row vectors of the Lagrangian frame are uniformly close to being orthogonal.
The $M$-diagonal bound further ensures that the row lengths are comparable.
Together, these conditions define a natural truncation of the homogeneous space $\Gamma \backslash G$, excluding frames with large pairwise correlations or highly unbalanced norms, i.e. those corresponding to deep excursions into the cusp.
In terms of $\Sym_d^{+}(\RR)$, they amount to a quantitative ``near-diagonal'' analogue for $\theta(v)$.
Such truncation/reduction principles are classical in the geometry of numbers (see, e.g., \cite{Cas96}).
The next proposition shows that the Frobenius-energy minimizer is unique up to $\mathrm W$ within these conditions and suitable $\eta$, $M$.

\begin{proposition}(Reduction criterion)\label{prop:cri}
Fix $M \geq 1$. Let $(d-1)\eta < 1/(1+dM)$. Assume $\Theta \in \Sym_d^+(\RR)$ is $\eta$-coherent and has the $M$-diagonal bound. For any $v \in \mathcal{V}^2$ with $\theta(v) = \Theta$, the set of minimizers of $\Phi_v(\delta)$ over $\GL_d(\ZZ)$ is $\mathrm{W}$. Equivalently, 
\[
\{v \in \mathcal{V}^2: \theta(v) \;\text{is $\eta$-coherent and has the $M$-diagonal bound}\} \subset \Omega_0.
\]

\end{proposition}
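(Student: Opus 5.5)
The plan is a direct energy comparison. First, $\|\delta v\|_F^2 = \tr(\delta v v\trans \delta\trans) = \Phi_v(\delta)$, so minimizing $\|\delta v\|_F$ over $\GL_d(\ZZ)$ is the same as minimizing $\Phi_v$. Second, every $\delta \in \mathrm{W}$ is a signed permutation, so $\Phi_v(\delta) = \tr\Theta$ for all such $\delta$. It therefore suffices to show that $\Phi_v(\delta) > \tr\Theta$ for every $\delta \in \GL_d(\ZZ)\setminus \mathrm{W}$. Then the minimizers of $\Phi_v$ are exactly $\mathrm{W}$, the minimizing coset is $\mathrm{W}\delta^{(0)}$ only, and so $k(v)=0$, i.e.\ $v\in\Omega_0$.

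\textbf{Step 1 (quadratic form lower bound).} Let $D=\diag(\Theta_{11}^{1/2},\dots,\Theta_{dd}^{1/2})$ and $B=D^{-1}\Theta D^{-1}$, as in the proof of Lemma~\ref{lem:alpha}. The matrix $B$ has unit diagonal, and by $\eta$-coherence its off-diagonal entries have modulus $<\eta$. By Gershgorin, $B \succeq (1-(d-1)\eta) I_d$. Hence for every $x\in\RR^d$,
\[
x\trans \Theta x \;\ge\; (1-(d-1)\eta)\sum_{i=1}^d x_i^2\,\Theta_{ii}.
\]
Applying this to the rows $\delta_j$ of $\delta$ gives
\[
\Phi_v(\delta) \ge (1-(d-1)\eta)\sum_{i,j}\delta_{ji}^2\,\Theta_{ii}.
\]

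\textbf{Step 2 (combinatorics of integer invertible matrices).} Since $\det\delta = \pm1 \neq 0$, the Leibniz expansion produces a permutation $\sigma$ with $\delta_{j\sigma(j)}\neq 0$ for all $j$. These entries are integers, so $\delta_{j\sigma(j)}^2 \ge 1$, and
\[
\sum_{i,j}\delta_{ji}^2\Theta_{ii} \;\ge\; \sum_j \Theta_{\sigma(j)\sigma(j)} + E \;=\; \tr\Theta + E,
\]
where $E$ collects the excess: the terms $(\delta_{j\sigma(j)}^2-1)\Theta_{\sigma(j)\sigma(j)}$ together with all terms $\delta_{ji}^2\Theta_{ii}$ with $i\neq\sigma(j)$. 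If $\delta\notin\mathrm{W}$, then either some $|\delta_{j\sigma(j)}|\ge 2$ or some off-support entry is nonzero. In either case $E\ge \min_i\Theta_{ii}$. The $M$-diagonal bound gives $\tr\Theta < dM\min_i\Theta_{ii}$, so $E > \tr\Theta/(dM)$.

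\textbf{Step 3 (conclusion).} Combining Steps 1 and 2, for $\delta\notin\mathrm{W}$,
\[
\Phi_v(\delta) > (1-(d-1)\eta)\Bigl(1+\tfrac{1}{dM}\Bigr)\tr\Theta .
\]
The right-hand side is at least $\tr\Theta$ precisely when
\[
(d-1)\eta \le 1-\frac{dM}{1+dM} = \frac{1}{1+dM},
\]
which is exactly the hypothesis. So $\Phi_v(\delta) > \tr\Theta$, as required.

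The main point needing care is Step 2: choosing $\sigma$ via the determinant expansion so that the ``diagonal'' contribution already accounts for all of $\tr\Theta$, and verifying that any non-signed-permutation $\delta$ incurs an excess of at least one full $\min_i\Theta_{ii}$. A secondary check is that the strict inequalities (coherence, the diagonal bound) make the final comparison strict.
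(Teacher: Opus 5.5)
Your proof is correct and follows essentially the same route as the paper. You use a lower bound $\Phi_v(\delta) \ge (1-(d-1)\eta)\sum_{i}\Theta_{ii}\sum_j\delta_{ji}^2$ from $\eta$-coherence, then an integrality argument showing any $\delta\notin\mathrm{W}$ incurs an excess of at least $\min_i\Theta_{ii}$, and finally the $M$-diagonal bound to reduce to $(d-1)\eta<1/(1+dM)$. The differences are cosmetic: you get the quadratic-form bound via Gershgorin on $D^{-1}\Theta D^{-1}$ instead of the termwise AM--GM estimate of Lemma~\ref{lem:com}, and you locate the excess through a permutation in the Leibniz expansion instead of the paper's simpler observation in Lemma~\ref{lem:diag} that every column of an invertible integer matrix has squared norm at least $1$.
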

    To prove the proposition, consider the following lemmata.

\begin{lemma}\label{lem:com}
    Assume $\Theta$ is $\eta$-coherent. Let $A_{\Theta} = \diag(\Theta_{11}, \dots, \Theta_{dd})$ be a diagonal part of $\Theta$. Then for any $z \in \ZZ^d$,
    \[
    (1-(d-1)\eta)z\trans A_{\Theta}z < z\trans \Theta z < (1+(d-1)\eta)z\trans A_{\Theta}z.
    \]
\end{lemma}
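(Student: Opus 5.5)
The plan is to expand the quadratic form into its diagonal and off-diagonal parts and bound the off-diagonal part termwise using $\eta$-coherence and then Cauchy--Schwarz. Throughout, $z \in \ZZ^d$ is taken nonzero and $d \geq 2$. For $z = 0$ both sides vanish, and for $d=1$ there are no off-diagonal entries, so the strict inequalities are meant in these nondegenerate cases.

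\emph{Step 1: splitting the form.} For $z \in \ZZ^d\setminus\{0\}$ write
\[
z\trans \Theta z = z\trans A_\Theta z + E(z), \qquad E(z) := \sum_{i \neq j} \Theta_{ij} z_i z_j .
\]
Set $a_i := \sqrt{\Theta_{ii}}\,|z_i| \geq 0$, so that $z\trans A_\Theta z = \sum_i a_i^2$. Since $\Theta$ is positive definite, every $\Theta_{ii}>0$, and hence $\sum_i a_i^2 > 0$.

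\emph{Step 2: bounding the off-diagonal part.} By $\eta$-coherence, $|\Theta_{ij} z_i z_j| \leq \eta\, a_i a_j$ for each $i \neq j$, and the inequality is strict whenever $a_i a_j > 0$. Therefore
\[
|E(z)| \leq \eta \sum_{i\neq j} a_i a_j = \eta\Big( \big(\textstyle\sum_i a_i\big)^2 - \sum_i a_i^2 \Big) \leq \eta (d-1) \sum_i a_i^2,
\]
where the last step uses Cauchy--Schwarz in the form $(\sum_i a_i)^2 \leq d \sum_i a_i^2$. Substituting this into the splitting of Step~1 gives both inequalities of the lemma in their non-strict form.

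\emph{Step 3: strictness.} This is the only delicate point, and I would handle it by cases on the support of $z$.
\begin{itemize}
\item If at least two coordinates of $z$ are nonzero, then some pair $i\neq j$ has $a_i a_j>0$, and the termwise coherence bound in Step~2 is strict for that pair. Hence $|E(z)| < (d-1)\eta\, z\trans A_\Theta z$.
\item If exactly one coordinate of $z$ is nonzero, then $E(z)=0$. In this case $|E(z)| < (d-1)\eta\, z\trans A_\Theta z$ holds because $\eta>0$, $d \geq 2$, and $z\trans A_\Theta z>0$.
\end{itemize}
In either case the two-sided strict inequality follows. Note that integrality of $z$ plays no role; the argument works verbatim for any nonzero $z\in\RR^d$.
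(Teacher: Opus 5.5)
Your proof is correct and follows essentially the same route as the paper: expand $z\trans\Theta z$ into diagonal and off-diagonal parts, bound each cross term by $\eta$-coherence, and sum using $\sum_{i\neq j}a_ia_j\le (d-1)\sum_i a_i^2$. The paper gets this last inequality from pairwise AM--GM and you get it from Cauchy--Schwarz, which is the same inequality.

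Your treatment of strictness is more careful than the paper's. The paper puts the strict inequality on the AM--GM step, which is not strict when $\sqrt{\Theta_{ii}}|z_i|=\sqrt{\Theta_{jj}}|z_j|$, and it does not exclude $z=0$ or $d=1$. You instead get strictness from coherence when two coordinates of $z$ are nonzero, and from $\eta>0$, $d\ge 2$ when only one is.
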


\begin{proof}
    Write $z\trans \Theta z = \sum_{i=1}^d\Theta_{ii}z_i^2 + 2\sum_{i < j}\Theta_{ij}z_iz_j$. Using $\eta$-coherence and arithmetic-geometric mean inequality, 
    \[
    2|\Theta_{ij}z_iz_j| \leq 2\eta\sqrt{\Theta_{ii}\Theta_{jj}}|z_i||z_j| < \eta(\Theta_{ii}z_i^2 + \Theta_{jj}z_j^2).
    \]
    Summing over $i < j$ gives
    \[
    |2\sum_{i<j}\Theta_{ij}z_iz_j| < \eta\sum_{i<j}(\Theta_{ii}z_i^2 + \Theta_{jj}z_j^2) = (d-1)\eta\sum_{i = 1}^d\Theta_{ii}z_i^2 = (d-1)\eta\; z\trans A_{\Theta} z.
    \]
    Hence, 
    \[
    z\trans \Theta z > z\trans A_{\Theta}z - (d-1)\eta\;z\trans A_{\Theta}z = (1 - (d-1)\eta)z\trans A_{\Theta} z
    \] and similarly, the upper bound holds.
\end{proof}

\begin{lemma}\label{lem:diag}
    Let $A = \diag(a_1, \dots, a_d)$ be a positive diagonal matrix. Let $v \in \mathcal{V}^2$ be a Lagrangian frame satisfying $\theta(v) = A$. Then
    \[
    \min_{\delta \in \GL_d(\ZZ)}\Phi_v(\delta) = \sum_{j=1}^d a_j.
    \]
\end{lemma}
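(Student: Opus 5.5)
Since $\theta(v) = A$ is diagonal, for any $\delta \in \GL_d(\ZZ)$ with rows $\delta_1\trans,\dots,\delta_d\trans$ we have
\[
\Phi_v(\delta) = \sum_{j=1}^d \delta_j\trans A \delta_j = \sum_{j=1}^d \sum_{i=1}^d a_i \delta_{ji}^2 = \sum_{i=1}^d a_i \Big(\sum_{j=1}^d \delta_{ji}^2\Big).
\]
The plan is to give a lower bound for each column sum $\sum_j \delta_{ji}^2$ and then exhibit the value $\sum_i a_i$ at $\delta = I_d$. The upper bound is immediate: $\Phi_v(I_d) = \tr(A) = \sum_j a_j$, so the minimum is at most $\sum_j a_j$.

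For the lower bound, I use only that $\delta$ is invertible over $\ZZ$. The $i$-th column of $\delta$ is an integer vector, and it cannot vanish, since otherwise $\det\delta = 0$. Hence $\sum_j \delta_{ji}^2 \geq 1$ for every $i$. Since each $a_i > 0$, this gives
\[
\Phi_v(\delta) \geq \sum_i a_i
\]
for all $\delta \in \GL_d(\ZZ)$, which together with the value at $I_d$ proves the identity.

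I will also record the equality case for use in Proposition~\ref{prop:cri}. Equality forces every column of $\delta$ to have squared norm exactly $1$, so each column is $\pm e_k$ for some $k$. Invertibility then makes the assignment $i \mapsto k$ a permutation, so $\delta \in \mathrm{W}$. Conversely, every element of $\mathrm{W}$ attains the value $\sum_i a_i$.

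There is no real obstacle here. The only point needing care is the justification that a nonzero integer column has squared norm at least $1$, which rests on $\det \delta = \pm 1 \neq 0$. The Lagrangian condition on $v$ plays no role beyond making $\theta(v)$ positive definite.
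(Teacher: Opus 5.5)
Your proof is correct and follows the paper's argument: write $\Phi_v(\delta)=\sum_i a_i r_i$, where $r_i$ is the squared norm of the $i$-th column of $\delta$; invertibility gives $r_i\ge 1$, and equality (attained at $I_d$) forces $\delta\in\mathrm{W}$. You also spell out two points the paper leaves implicit: that the value $\sum_j a_j$ is attained at $\delta=I_d$, and why columns of the form $\pm e_k$ together with invertibility give a signed permutation.
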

\begin{proof}
    By the definition of the Frobenius energy function, we have
    \[
    \Phi_v(\delta) = \tr(\delta A\delta\trans ) = \sum_{j=1}^d a_jr_j \qquad r_j = \sum_{i=1}^d\delta_{ij}^2 \in \ZZ.
    \]
    Since $\delta$ is invertible, no column is identically zero, so $r_i \in \ZZ_{\geq 1}$. Therefore,
    \[
    \Phi_v(\delta) = \sum_{j=1}^d a_jr_j \geq \sum_{j=1}^d a_j
    \] with equality holding if and only if $r_i = 1$ for all $i$. It implies $\delta \in \mathrm{W}$. 
\end{proof}

\begin{proof}[Proof of Proposition~\ref{prop:cri}]
    Let $\Theta_{\min} = \min_{i}\Theta_{ii}$. Take any $v, v_0$ such that $\theta(v) = \Theta$, $\theta(v_0) = A_{\Theta}$ where $A_{\Theta} = \diag(\Theta_{11}, \dots, \Theta_{dd})$. By Lemma~\ref{lem:com}, for any $z \in \ZZ^d$, 
    \[
    (1-c)z\trans A_{\Theta}z < z\trans\Theta z, \qquad c = (d-1)\eta.
    \]
    Summing over rows $\delta_1\trans, \dots, \delta_d\trans$ of $\delta$ gives, for all $\delta \in \GL_d(\ZZ)$, 
    \[
    (1-c)\Phi_{v_0}(\delta) < \Phi_v(\delta).
    \]
    Note that if $w \in \mathrm{W}$, we have $\Phi_{v_0}(w) = \Phi_
    v(w) = \sum_{i=1}^d \Theta_{ii}$ since $w$ is an orthogonal matrix. Take any $\delta \not\in \mathrm{W}$. By Lemma~\ref{lem:diag} applied to $A_{\Theta}$, 
    \[
    \Phi_{v_0}(\delta) \geq  \Theta_{\min} + \sum_{i=1}^d\Theta_{ii}.
    \]
    Using the lower bound $(1-c)z\trans A_{\Theta}z < z\trans \Theta z$,
    \[
    \Phi_v(\delta) > (1-c)\Phi_{v_0}(\delta) \geq (1-c)(\Theta_{\min} + \sum_{i=1}^d \Theta_{ii}) = (1-c)\Theta_{\min} + (1-c)\sum_{i=1}^d\Theta_{ii}.
    \]
    For $\Phi_v(\delta) > \Phi_v(w) = \sum_{i=1}^d\Theta_{ii}$ to hold for all $\delta \not\in \mathrm{W}$, it suffices that
    \[
    (1-c)\Theta_{\min} + (1-c)\Phi_v(w) > \Phi_v(w)
    \] which is equivalent to 
    \[
    (1-c)\Theta_{\min} > c\Phi_v(w) = c\sum_{i=1}^d\Theta_{ii}.
    \]
    By $M$-diagonal boundedness, $\sum_{i}\Theta_{ii} \leq d \max_i\Theta_{ii} \leq dM \Theta_{\min}$. Hence, it suffices that
    \[
    (1-c)\Theta_{\min} > cdM\Theta_{\min} \implies c < \frac{1}{1+dM}.
    \]
\end{proof}

\begin{lemma}[Reduced rigid domain]\label{lem:rig}
    Assume that the parameters $\eta, M$ satisfy
    \[
    (d-1)\eta < \frac{1}{1+dM}.
    \]
    Define
    \[
    \mathcal{D}_{\mathrm{red}}(\eta, M) = \{v \in \mathcal{V}^2: \theta(v) \text{ is $\eta$-coherent, has the $M$-diagonal bound with $\min_{i}\theta(v)_{ii} < \frac{1}{dM}$}\}.
    \] 
    Then it is a non-empty open subset of $B_1 \cap \Omega_0$
\end{lemma}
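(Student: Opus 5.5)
Three things need to be shown: $\mathcal{D}_{\mathrm{red}}(\eta,M)\subset B_1\cap\Omega_0$, openness in $\mathcal{V}^2$, and non-emptiness.

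For the inclusion, I would first handle $\Omega_0$ using Proposition~\ref{prop:cri}. The hypothesis $(d-1)\eta<1/(1+dM)$ is exactly its assumption, so for $v\in\mathcal{D}_{\mathrm{red}}(\eta,M)$ the minimizers of $\Phi_v$ over $\GL_d(\ZZ)$ are precisely $\mathrm W$. Since $\Phi_v(\delta)=\|\delta v\|_F^2$, this says $\rho(v)=\|v\|_F$, and $\|\delta v\|_F=\rho(v)$ only for $\delta\in\mathrm W$. Because $\delta^{(0)}=I_d$ and the enumeration is of cosets $\mathrm W\delta^{(i)}$, the only index attaining the minimum is $i=0$. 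Hence $k(v)=0$, i.e.\ $v\in\Omega_0$. (This presupposes that $k$ is read on cosets, which is legitimate since $\|w\delta v\|_F=\|\delta v\|_F$ for $w\in\mathrm W$ orthogonal.)

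For the Frobenius bound, write $\|v\|_F^2=\tr\theta(v)=\sum_i\theta(v)_{ii}$. The $M$-diagonal bound gives $\le d\max_i\theta(v)_{ii}<dM\min_i\theta(v)_{ii}$, and the assumption $\min_i\theta(v)_{ii}<1/(dM)$ then gives $\|v\|_F^2<1$. So $v\in B_1$.

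Openness is the one point needing care. The map $v\mapsto\theta(v)=vv\trans$ is continuous on $\mathcal{V}^2$, and $\theta(v)$ is positive definite there because $\det(QQ\trans+PP\trans)\ne0$. All defining conditions are finitely many strict inequalities between continuous functions of the entries of $\theta(v)$: coherence $|\Theta_{ij}|<\eta\sqrt{\Theta_{ii}\Theta_{jj}}$, $\Theta_{ii}<M\Theta_{jj}$ for all $i,j$, and $\min_i\Theta_{ii}<1/(dM)$. Each of these cuts out an open set, and the intersection is open.

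For non-emptiness, take $v=sF=(0,\ sI_d)$ with $s>0$ small. It is a Lagrangian frame, and $\theta(v)=s^2I_d$. This is $\eta$-coherent since all off-diagonal entries vanish, and it satisfies the $M$-diagonal bound because $\max=\min$ and $M>1$ (if $M=1$ is allowed, the strict inequality fails and one must assume $M>1$ as in Definition~\ref{def:4.9}). The last condition holds once $s^2<1/(dM)$.
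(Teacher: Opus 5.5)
Your proof is correct and follows the paper's argument: openness because the defining conditions are finitely many strict inequalities in the entries of $\theta(v)$, inclusion in $\Omega_0$ via Proposition~\ref{prop:cri}, and $\|v\|_F^2=\tr\theta(v)<dM\min_i\theta(v)_{ii}<1$ for $B_1$. The paper's proof does not address non-emptiness, and you supply it with the witness $v=sF$; you also correctly note that this requires $M>1$, since for $M=1$ the $M$-diagonal bound cannot be satisfied and the set is empty.
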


\begin{proof}
    Since the defining inequalities for $\mathcal{D}_{\mathrm{red}}(\eta, M)$ are open, the domain is an open subset of $\mathcal{V}^2$. By Proposition~\ref{prop:cri}, $\mathcal{D}_{\mathrm{red}}(\eta, M)$ is contained in $\Omega_0$. It remains to prove $\mathcal{D}_{\mathrm{red}}(\eta, M) \subset B_1$. For $v \in \mathcal{D}_{\mathrm{red}}(\eta, M)$, we compute
    \[\|v\|_F^2 = \tr(\theta(v)) = \sum_{i}\theta(v)_{ii} \leq d \max_{i}{\theta(v)_{ii}} \leq dM\min_i{\theta(v)_{ii}} < 1.\]
    Hence, $v \in B_1$.
\end{proof}

\begin{remark}
    The defining inequalities for $\mathcal{D}_{\mathrm{red}}(\eta, M)$ are preserved under scaling. Since $\theta(\varepsilon v) = \varepsilon^2\theta(v)$, it follows that 
    $
    \varepsilon \mathcal{D}_{\mathrm{red}}(\eta, M) \subset \mathcal{D}_{\mathrm{red}}(\eta, M)
    $ for $\varepsilon \in (0, 1)$
    Consequently, 
    \[
    \varepsilon \mathcal{D}_{\mathrm{red}}(\eta, M) \subset B_{\varepsilon} \cap \Omega_0.
    \]
\end{remark}

\begin{corollary}[Lower Bound for Cuspidal Volume]\label{cor:lbd}
    Let $A_R = \{\Gamma g \in \mathcal{X}: \Delta(\Gamma g) > R\}.$ Then
    \[
    m_{\mathcal{X}}(A_R) \gg e^{-\varrho R}.
    \]
\end{corollary}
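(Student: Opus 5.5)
The lower bound will come from the Paley--Zygmund inequality \eqref{eq:pzineq}, applied to the rigid counting function $N'_\varepsilon = \widehat{\mathds{1}}_{B_\varepsilon\cap\Omega_0}$ with $\varepsilon$ comparable to $e^{-R}$. Since all norms on $\mathrm{Mat}_{d\times 2d}(\RR)$ are equivalent, there is a constant $c>0$ depending only on $d$ with $B_{c e^{-R}}\subset S_{e^{-R}}$. Put $\varepsilon = c e^{-R}$. If $N'_\varepsilon(\Gamma g)\geq 1$, then $\Gamma g$ contains a primitive frame $x$ with $\infnorm{x}<e^{-R}$, so $\Delta(\Gamma g)>R$ (up to an immaterial boundary convention). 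This gives the pointwise inclusion
\[
A_R \supset \{\Gamma g: N'_\varepsilon(\Gamma g)\geq 1\}.
\]

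\textbf{Step 1: first moment.} By Proposition~\ref{prop:siegel_mean_value_formula},
\[
\mathbb{E}[N'_\varepsilon]=m_{\mathcal{V}^2}(B_\varepsilon\cap\Omega_0).
\]
I bound this from below using the reduced rigid domain. Fix $\eta,M$ with $(d-1)\eta<1/(1+dM)$. By Lemma~\ref{lem:rig} and the remark after it, $\varepsilon\mathcal{D}_{\mathrm{red}}(\eta,M)\subset B_\varepsilon\cap\Omega_0$. Now $m_{\mathcal{V}^2}$ is $G$-invariant, and the scaling $v\mapsto \varepsilon v$ is realized by right multiplication by the central-like element $\iota(\varepsilon I_d)$ acting on the $P$-block. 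Computed in the Iwasawa coordinates of Lemma~\ref{lem:vol_lagrangian_ball}, scaling by $\varepsilon$ shifts every $t_i$ by $\log\varepsilon$ and leaves $u$ fixed. The density $e^{2\rho(\mathbf{t})}\dd u\,\dd\mathbf{t}\,\dd k$ then picks up exactly the factor $\varepsilon^{2\sum_i(d+1-i)}$ times the Jacobian in $u$, and the Lemma shows the net homogeneity is $\varepsilon^{\varrho}$. I would verify this by redoing the Lemma's change of variables for a general scaled set instead of a ball. It yields
\[
m_{\mathcal{V}^2}(\varepsilon\mathcal{D}_{\mathrm{red}})=\varepsilon^{\varrho}\,m_{\mathcal{V}^2}(\mathcal{D}_{\mathrm{red}}).
\]
Since $\mathcal{D}_{\mathrm{red}}$ is a nonempty open set (Lemma~\ref{lem:rig}), it has positive Haar measure, so $\mathbb{E}[N'_\varepsilon]\gg\varepsilon^{\varrho}\asymp e^{-\varrho R}$.

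\textbf{Step 2: second moment.} Expand $(N'_\varepsilon)^2$ as a double sum over $x,y\in\mathcal{V}^2_{\mathrm{prim}}$. For $\varepsilon$ small, the frames $xg$ and $yg$ lie in $B_\varepsilon\subset S_{\varepsilon'}$ with $\varepsilon'<1/\sqrt{2d}$, so Corollary~\ref{cor:lag} gives $y=\delta x$ with $\delta\in\GL_d(\ZZ)$. Both $xg$ and $\delta xg$ lie in $\Omega_0$, hence both minimize the Frobenius norm over the $\GL_d(\ZZ)$-orbit. Because the index $k(\cdot)=0$ is attained only on $\mathrm{W}$, this forces $\delta\in\mathrm{W}$. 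I need to check this point carefully: if $xg\in\Omega_0$ and $\delta xg\in\Omega_0$, then $\|\delta xg\|_F=\rho(xg)$, so $\delta\in\mathrm{W}\delta^{(i)}$ with $i\le k(xg)=0$, i.e. $\delta\in\mathrm{W}$. The double sum therefore collapses to $|\mathrm{W}|$ copies of the single sum, and the unfolding \eqref{eq:secmt} gives
\[
\mathbb{E}[(N'_\varepsilon)^2]\le|\mathrm{W}|\,\mathbb{E}[N'_\varepsilon].
\]

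\textbf{Step 3: conclusion.} By Paley--Zygmund (or directly Cauchy--Schwarz),
\[
m_{\mathcal{X}}(N'_\varepsilon\ge1)\ge \frac{\mathbb{E}[N'_\varepsilon]^2}{\mathbb{E}[(N'_\varepsilon)^2]}\ge\frac{\mathbb{E}[N'_\varepsilon]}{|\mathrm{W}|}\gg e^{-\varrho R}.
\]
This holds for all $R$ beyond a threshold depending only on $d$; for bounded $R$ the bound is trivial, since $A_R$ decreases in $R$ and has positive measure. The main obstacle is Step 1's exact scaling $m_{\mathcal{V}^2}(\varepsilon E)=\varepsilon^{\varrho}m_{\mathcal{V}^2}(E)$. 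If that homogeneity is awkward to derive cleanly, I would fall back on comparing $m_{\mathcal{V}^2}(\varepsilon\mathcal{D}_{\mathrm{red}})$ with $m_{\mathcal{V}^2}(B_\varepsilon)$ directly: in the Iwasawa integral of Lemma~\ref{lem:vol_lagrangian_ball}, $\mathcal{D}_{\mathrm{red}}$ contains a product region ($k$ near the identity, $t_i$ in a fixed window, $u$ small), and the same computation gives order $\varepsilon^{\varrho}$.
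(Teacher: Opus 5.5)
Your proposal is correct and follows essentially the same route as the paper: Paley--Zygmund for the rigid count $N'_\varepsilon$, Corollary~\ref{cor:lag} plus rigidity to collapse the second moment to $|\mathrm{W}|\,\mathbb{E}[N'_\varepsilon]$ (you also show explicitly why $\delta\in\mathrm{W}$, which the paper only asserts), and the scaled reduced domain $\varepsilon\mathcal{D}_{\mathrm{red}}(\eta,M)$ with exact homogeneity $\varepsilon^{\varrho}$ for the first moment. One small clarification on the scaling: the dilation $Fg\mapsto \varepsilon Fg = F\iota(\varepsilon I_d)g$ is \emph{left} multiplication of the coset $Ng$ by an element normalizing $N$, since right multiplication by any element of $G$ preserves $m_{\mathcal{V}^2}$; it shifts $\mathbf{t}$ by $\log\varepsilon\cdot\mathbf{1}$, fixes $u$ and $k$, involves no Jacobian in $u$, and so gives exactly the factor $e^{2\rho(\log\varepsilon\,\mathbf{1})}=\varepsilon^{d(d+1)}=\varepsilon^{\varrho}$, where $\rho$ is the modular factor of \eqref{eq:modular_factor}.
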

\begin{proof}

Let $R = -\log\varepsilon$. Since $A_R$ is the cusp region defined by the existence of a sufficiently short integral frame, we have
\[
m_{\mathcal{X}}(A_R) \geq m_{\mathcal{X}}(\{\Gamma g: N_{\varepsilon}'(\Gamma g) \geq 1\}).
\]
Fix $\beta \in (0, 1)$. By the Paley--Zygmund inequality applied to the nonnegative random variable $N_{\varepsilon}'$,
\[
m_{\mathcal{X}}(N_{\varepsilon}' \geq \beta \mathbb{E}[N_{\varepsilon}']) \geq (1-\beta)^2\frac{\mathbb{E}[N_{\varepsilon}']^2}{\mathbb{E}[(N_{\varepsilon}')^2]} \gg \mathbb{E}[N_{\varepsilon}'] = m_{\mathcal{V}^2}(B_{\varepsilon}\cap \Omega_0).
\]
Choose $(\eta, M)$ so that $\varepsilon\mathcal{D}_{\mathrm{red}}(\eta, M) \subset B_{\varepsilon}\cap \Omega_0$. Since $\mathcal{D}_{\mathrm{red}}(\eta, M)$ is independent of $\varepsilon$ and has positive $m_{\mathcal{V}^2}$-measure, we obtain a uniform lower bound
\[
\frac{m_{\mathcal{V}^2}(B_{\varepsilon} \cap \Omega_0)}{m_{\mathcal{V}^2}(B_{\varepsilon})} \;\geq\; \frac{m_{\mathcal{V}^2}(\varepsilon \mathcal{D}_{\mathrm{red}}(\eta, M))}{m_{\mathcal{V}^2}(B_{\varepsilon})} \;=\; \frac{m_{\mathcal{V}^2}(\mathcal{D}_{\mathrm{red}}(\eta, M))}{m_{\mathcal{V}^2}(B_1)} \;>\; 0,
\]
where the middle equality uses the homogeneous scaling of $m_{\mathcal{V}^2}$. Since scalar dilation by $\varepsilon > 0$ on $\mathcal{V}^2$ is induced by the right action of $\diag(\varepsilon^{-1}I_d, \varepsilon I_d) \in A$ , and the Iwasawa integration formula computed in the proof of Lemma~\ref{lem:vol_lagrangian_ball} shows that this action scales $m_{\mathcal{V}^2}$ by exactly the factor $\varepsilon^{\varrho}$ (with $\varrho = d^2 + d$). Hence for any Borel set $D \subset \mathcal{V}^2$,
\[
m_{\mathcal{V}^2}(\varepsilon D) = \varepsilon^{\varrho}\, m_{\mathcal{V}^2}(D),
\]
which is precisely the scaling used above. Combining the lower bound with $m_{\mathcal{V}^2}(B_\varepsilon) \asymp \varepsilon^{\varrho}$ (Lemma~\ref{lem:vol_lagrangian_ball}),
\[
m_{\mathcal{V}^2}(B_{\varepsilon}\cap \Omega_0) \;\gg\; m_{\mathcal{V}^2}(B_{\varepsilon}) \;\asymp\; \varepsilon^{\varrho} \;=\; e^{-\varrho R}. \qedhere
\]
\end{proof}

\begin{proof}[Proof of Theorem~\ref{thm:sk}]
    By Corollaries \ref{cor:ubd} and \ref{cor:lbd}, we conclude that the function $\Delta$ is $\varrho$-distance-like. By Proposition~\ref{prop:dani}, the Khintchine-type problem translates into a dynamical shrinking target problem: $X \in W(\psi)$ if and only if the one-parameter orbit $\Gamma u_Xg_t^{-1}$ enters the shrinking cusp neighborhoods $A_{r(t)}$ for arbitrarily large $t$. Since $\Delta$ is distance-like, Theorem~\ref{thm:BC} guarantees that the occurrence of this event for almost every $X \in \Sym_d(\RR)$ is determined by the convergence or divergence of the sum $\sum_{t = 1}^{\infty}m_{\mathcal{X}}(A_{r(t)})$. Finally, the volume computation in \eqref{eq:dichotomy_condition} shows that this sum is equiconvergent with the series $\sum_{q=1}^{\infty} q^{\varsigma-1}\psi(q)^{\varsigma}$, which concludes the proof.
\end{proof}

\end{document}